\documentclass[wcp]{jmlr}

\usepackage{makecell}
\usepackage{lscape}
\usepackage{adjustbox}
\newcommand{\tabitem}{~~\llap{\textbullet}~~}

% The following packages will be automatically loaded:
% amsmath, amssymb, natbib, graphicx, url, algorithm2e

%\usepackage{rotating}% for sideways figures and tables
\usepackage{longtable}% for long tables

\usepackage{microtype}
%\usepackage{subfigure}
% The booktabs package is used by this sample document
% (it provides \toprule, \midrule and \bottomrule).
% Remove the next line if you don't require it.
\usepackage{booktabs}
% The siunitx package is used by this sample document
% to align numbers in a column by their decimal point.
% Remove the next line if you don't require it.
%\usepackage[load-configurations=version-1]{siunitx} % newer version
%\usepackage{siunitx}
%\usepackage{natbib}

% The following command is just for this sample document:
%\newcommand{\cs}[1]{\texttt{\char`\\#1}}

\usepackage{preamble}
%%  weird fix for \includegraphics
%  https://tex.stackexchange.com/questions/513300/unable-to-compile-with-includegraphics-using-jmlr-cls
\makeatletter
\def\set@curr@file#1{\def\@curr@file{#1}} %temp workaround for 2019 latex release
\makeatother
\usepackage[load-configurations=version-1]{siunitx} % newer version

\jmlrvolume{129}
\jmlryear{2020}
\jmlrworkshop{ACML 2020}

% \title[Convergence Rates of a Momentum Algorithm with Bounded Adaptive Step Size]
% {Convergence Rates of a Momentum Algorithm with Bounded Adaptive Step Size for Nonconvex Optimization}
\title[Convergence Rates of a Momentum Algorithm with Bounded Adaptive Step Size]
{Convergence Rates of a Momentum Algorithm with Bounded Adaptive Step Size for Nonconvex Optimization}
 % Use \Name{Author Name} to specify the name.
 % If the surname contains spaces, enclose the surname
 % in braces, e.g. \Name{John {Smith Jones}} similarly
 % if the name has a "von" part, e.g \Name{Jane {de Winter}}.
 % If the first letter in the forenames is a diacritic
 % enclose the diacritic in braces, e.g. \Name{{\'E}louise Smith}

 %Two authors with the same address
 \author{\Name{Anas Barakat} \Email{anas.barakat@telecom-paris.fr}\\
  \Name{Pascal Bianchi} \Email{pascal.bianchi@telecom-paris.fr}\\
  \addr LTCI, Télécom Paris, Institut Polytechnique de Paris, France}

 % Three or more authors with the same address:
  % \author{\Name{Anas Barakat} \Email{anas.barakat@telecom-paris.fr}\\
  %  \Name{Pascal Bianchi} \Email{pascal.bianchi@telecom-paris.fr}\\
 %  \Name{Author Name3} \Email{an3@sample.com}\\
 %  \Name{Author Name4} \Email{an4@sample.com}\\
 %  \Name{Author Name5} \Email{an5@sample.com}\\
 %  \Name{Author Name6} \Email{an6@sample.com}\\
 %  \Name{Author Name7} \Email{an7@sample.com}\\
 %  \Name{Author Name8} \Email{an8@sample.com}\\
 %  \Name{Author Name9} \Email{an9@sample.com}\\
 %  \Name{Author Name10} \Email{an10@sample.com}\\
 %  \Name{Author Name11} \Email{an11@sample.com}\\
 %  \Name{Author Name12} \Email{an12@sample.com}\\
 %  \Name{Author Name13} \Email{an13@sample.com}\\
 %  \Name{Author Name14} \Email{an14@sample.com}\\
   % \addr LTCI, Telecom Paris, Institut Polytechnique de Paris,
   % Paris, France}

 % Authors with different addresses:
 %  \author{\Name{Author Name1} \Email{abc@sample.com}\\
 %  \addr Address 1
 %  \AND
 %  \Name{Author Name2} \Email{xyz@sample.com}\\
 %  \addr Address 2
 % }

\editors{Sinno Jialin Pan and Masashi Sugiyama}

\setcounter{page}{225}

\begin{document}

\maketitle

\begin{abstract}
  Although \adam\ is a very popular algorithm for optimizing the weights of neural networks,
  it has been recently shown that it can diverge even in simple convex optimization examples.
  Several variants of \adam\ have been proposed to circumvent this
  convergence issue.
  In this work, we study the \adam\ algorithm for smooth nonconvex optimization under
  a boundedness assumption on the adaptive learning rate.
  The bound on the adaptive step size depends on the Lipschitz constant of the
  gradient of the objective function and provides safe theoretical adaptive
  step sizes.
  Under this boundedness assumption, we show a novel first order convergence rate result in both deterministic
  and stochastic contexts. Furthermore, we establish convergence rates of the function value sequence
  using the Kurdyka-\L{}ojasiewicz property.\\
  % which is satisfied for most deep neural networks.
\end{abstract}

\begin{keywords}
Nonconvex optimization, Adaptive gradient methods, Kurdyka-\L{}ojasiewicz inequality.
\end{keywords}

\section{Introduction}

Consider the unconstrained optimization problem
%\begin{equation}
$ \min_{x \in \bR^d} f(x)$,
%\end{equation}
where $f:\bR^d \to \bR$ is a differentiable map and $d$ is an integer. Gradient descent is one of the most classical algorithms to solve this problem.
Since the seminal work \citet{robbins1951stochastic}, its stochastic counterpart became one of the most popular algorithms to solve machine learning problems (see \citet{bottou2018optimization} for a recent survey).
Recently, a class of algorithms called adaptive algorithms which are variants of stochastic gradient descent became very popular in machine learning applications \citep{duchi2011adaptive}. Using a coordinate-wise step size computed using past gradient information, the step size is adapted to the function to optimize and does not follow a predetermined step size schedule. Among these adaptive algorithms, \adam\, \citep{kingma2014adam} is very popular for optimizing the weights of neural networks. However, recently, \citet{j.2018on} exhibited a simple convex stochastic optimization problem over a compact set where \adam\ fails to converge because of its short-term gradient memory. Moreover, they proposed an algorithm called \textsc{Amsgrad} to fix the convergence issue of \adam\,. This work opened the way to the emergence of other variants of \adam\, to overcome its convergence issues (see \Cref{sec:related_works} for a detailed review). In this work, under a bounded step size assumption, we propose a theoretical analysis of \adam\, for nonconvex optimization.\\

%\newpage
\noindent\textbf{Contributions.}
\begin{itemize}%[topsep=0pt,leftmargin=*]
\item We establish a convergence rate for \adam\ in the deterministic case for nonconvex optimization under a bounded step size. This algorithm can be seen as a deterministic clipped version of \adam\, which guarantees safe theoretical step sizes. More precisely, if $n$ is the number of iterations of the algorithm, we show
a $O(1/n)$ convergence rate of the minimum of the squared  gradients norms by introducing a suitable Lyapunov function.

\item We show a similar convergence result for nonconvex stochastic optimization up to the limit of the variance of stochastic gradients under an almost surely bounded step size. In comparison to the literature, the hypothesis of the boundedness of the gradients is relaxed and the convergence result is independent of the dimension $d$ of the parameters.

\item We propose a convergence rate analysis of the objective function of the algorithm using the Kurdyka-\L{}ojasiewicz (K\L{}) property.
%which is satisfied for most neural networks.
To the best of our knowledge, this is the first time such a result is established for an adaptive optimization algorithm.
\end{itemize}

The rest of the paper is organized as follows. \Cref{sec:algo} introduces the algorithm we analyze. \Cref{sec:related_works} considers some related works. \Cref{sec:convrate} establishes first order convergence rates in terms of the minimum of the gradients norms in both deterministic and stochastic settings. Finally, \Cref{sec:KLrates} derives function value convergence rates under the K\L{} property.
%\Cref{sec:simus} illustrates our theoretical results on numerical simulations.
%We conclude this paper in \Cref{sec:conclusion}.
All the proofs are deferred to the Appendix in the supplementary material.

\section{A Momentum Algorithm with Adaptive Step Size}
\label{sec:algo}

\noindent \textbf{Notations.} All operations between vectors of $\bR^d$ are to read coordinatewise.
In particular, for two vectors $x$, $y$ in $\bR^d$ and $\alpha \in \ZZ$, we denote by $xy$, $x/y$, $x^\alpha$ the vectors on $\bR^d$ whose $k$-th coordinates are respectively given by $x_k y_k$, $x_k/y_k$, $x_k^\alpha$.
The vector of ones of $\bR^d$ is denoted by $\bf{1}$. When a scalar is added to a vector, it is added to each one of its coordinates. Inequalities are also to be read coordinatewise. If $x \in \bR^d, x \leq \lambda \in \bR$ means that each coordinate of $x$ is smaller than $\lambda$.\\

We investigate the following algorithm defined by two sequences $(x_n)$ and $(p_n)$ in $\bR^d$:
\begin{equation}
  \label{algo}
\begin{cases} x_{n+1} = x_{n} - a_{n+1} p_{n+1} \\  p_{n+1} = p_{n} + b\,(\nabla f(x_{n}) - p_n)\,\end{cases}
\end{equation}
where $\nabla f(x)$ is the gradient of $f$ at point $x$, $(a_n)$ is a sequence of vectors in $\bR^d$ with positive coordinates, $b$ is a positive real constant and $x_0, p_0 \in \bR^d$.

Algorithm~(\ref{algo}) includes the classical Heavy-ball method as a special case, but is much more general. Indeed, we allow the sequence of step sizes $(a_n)$ to be adaptive :  $a_n \in \bR^d$ may depend on the past gradients $g_k \eqdef \nabla f(x_k)$ and the iterates $x_k$ for $k \leq n$. We stress that the step size $a_n$ is a vector of $\bR^d$ and that the product $a_{n+1}p_{n+1}$ in (\ref{algo}) is read componentwise (this is equivalent to the formulation with a diagonal matrix preconditioner applied to the gradient \citep{McMahan2010AdaptiveBO,gupta2017unified,pmlr-v97-agarwal19b,staib19escapingsaddlesadaptive}).

We present in \Cref{table_algos} how to recover some of the famous algorithms with a vector step size formulation.
\begin{table*}[t]
\caption{Some famous algorithms.}\label{table_algos}

\centering
\resizebox{0.5\vsize}{!}{
\begin{tabular}{|c|c|c|}
  \hline
  \textbf{Algorithm} & \textbf{Effective step size} $a_{n+1}$ &   \textbf{Momentum}\\
  \hline
  \makecell{\textsc{SGD}\\ \citep{robbins1951stochastic}} & $a_{n+1} \equiv a$ & \makecell{$b=1$\\(no momentum)}\\
  \hline
  \makecell{\textsc{Adagrad}\\ \citep{duchi2011adaptive}} & $a_{n+1} = a\left(\sum_{i=0}^n g_i^2\right)^{-1/2}$ & $b=1$\\
  \hline
  \makecell{\textsc{Rmsprop}\\ \citep{tieleman2012lecture}} & $a_{n+1} = a\left[ \epsilon +\left(c\sum_{i=0}^n (1-c)^{n-i} g_i^2\right)^{1/2}  \right]^{-1}$ & $b=1$\\
  \hline
  \makecell{\adam\ \\\citep{kingma2014adam}} & $a_{n+1} = a\left[ \epsilon +\left(c\sum_{i=0}^n (1-c)^{n-i} g_i^2\right)^{1/2}  \right]^{-1}$ & \makecell{$0 \leq b \leq 1$\\(close to 0)}\\
  \hline
\end{tabular}
}
\end{table*}
In particular, \textsc{adam} \citep{kingma2014adam} defined by the iterates :
\begin{equation}
  \label{adam}
\begin{cases} x_{n+1} = x_{n} - \frac{a}{\epsilon+\sqrt{v_{n+1}}} p_{n+1} \\
              p_{n+1} = p_{n} + b\,(\nabla f(x_{n}) - p_n)\\
              v_{n+1} = v_n + c\,(\nabla f(x_{n})^2 - v_n)\,\end{cases}
\end{equation}
for constants $a\in \bR_+$, $b,c \in [0,1]$, can be seen as an instance of this algorithm by setting $a_n = \frac{a}{\epsilon + \sqrt{v_n}}$ where the vector $v_n$, as defined above, is an exponential moving average of the gradient squared. For simplification, we omit bias correction steps for $p_{n+1}$ and $v_{n+1}$. Their effect vanishes quickly along the iterations.

We introduce the main assumption on the objective function which is standard in gradient-based algorithms analysis.

\begin{assumption} \label{hyp:model}
The mapping $f:\bR^d \to \bR$ is:
  \begin{enumerate}[{\sl (i)},topsep=0pt,noitemsep]
  \item continuously differentiable and its gradient $\nabla f$ is $L-$Lipschitz continuous,
  \item bounded from below, i.e.,  $\inf_{x \in \bR^d} f(x) > -\infty$\,.
  \end{enumerate}
\end{assumption}

\section{Related Works}
\label{sec:related_works}

\subsection{The Heavy-Ball Algorithm.}

\noindent\textbf{Adaptive algorithms as Heavy Ball.}
Thanks to its small per-iteration cost and its acceleration properties (at least in the strongly convex case), the Heavy-ball method, also called gradient descent with momentum, recently regained popularity in large-scale optimization \citep{sutskever2013importance}. This speeding up idea dates back to the sixties with the seminal work of \citet{polyak1964some}. In order to tackle nonconvex optimization problems, \citet{ipiano} proposed iPiano, a generalization of the well known heavy-ball in the form of a forward-backward splitting algorithm with an inertial force for the sum of a smooth possibly nonconvex and a convex function. In the particular case of the Heavy-ball method, this algorithm writes for two sequences of reals $(\alpha_n)$ and $(\beta_n)$:
\begin{equation}
x_{n+1} = x_n - \alpha_n \nabla f(x_n) + \beta_n (x_{n}-x_{n-1})\,.
\label{hb}
\end{equation}

We remark that Algorithm~(\ref{algo}) can be written in a similar fashion by choosing step sizes $\alpha_n = b a_{n+1}$ and inertial parameters $\beta_n = (1-b)a_{n+1}/a_n$.
\citet{ipiano} only consider the case where $\alpha_n$ and $\beta_n$ are real-valued. Moreover, the latter does not consider adaptive step sizes, i.e step sizes depending on past gradient information. We can show some improvement with respect to \citet{ipiano} with weaker convergence conditions in terms of the step size of the algorithm (see \Cref{appendix:comp_ipiano}) while allowing adaptive vector-valued step sizes $a_n$ (see Proposition~\ref{prop:cv-a_n}).

It is shown in \citet{ipiano} that the sequence of function values converges and that every limit point is a critical point of the objective function. Moreover,
supposing that the Lyapunov function has the K\L{} property at a cluster point, they show the finite length of the sequence of iterates and its global convergence to a critical point of the objective function. Similar results are shown in \citet{wu2019general} for a more general version than iPiano \citep{ipiano} computing gradients at an extrapolated iterate like in Nesterov's acceleration.\\

\noindent\textbf{Convergence rate.}
\citet{ipiano} determines a $O(1/n)$ convergence rate (where $n$ is the number of iterations of the algorithm) with respect to the proximal residual which boils down to the gradient for noncomposite optimization. Furthermore, a recent work introduces a generalization of the Heavy-ball method (and Nesterov's acceleration) to constrained convex optimization in Banach spaces and provides a non-asymptotic hamiltonian based analysis with $O(1/n)$ convergence rate \citep{diakonikolas2019generalized}.
In the same vein, in \Cref{sec:convrate}, we establish a similar convergence result for an adaptive step size instead of a fixed predetermined step size policy like in the Heavy-ball algorithm (see \Cref{thm:deterministic}).\\

\noindent\textbf{Convergence rates under the K\L{} property.}
The K\L{} property is a powerful tool to analyze gradient-like methods. We elaborate on this property in \Cref{sec:KLrates}. Assuming that the objective function satisfies this geometric property, it is possible to derive convergence rates. Indeed, some recent progress has been made to study convergence rates of the Heavy-ball algorithm in the nonconvex setting. \citet{ochs2018local} establishes local convergence rates for the iterates and the function values sequences under the K\L{} property. The convergence proof follows a general method that is often used in non-convex optimization convergence theory. This framework was used for gradient descent \citep{absil2005convergence}, for proximal gradient descent (see \citet{attouch2009convergence} for an analysis with the \L{}ojasiewicz inequality) and further generalized to a class of descent methods called \textit{gradient-like descent} algorithms. %\citep{attouch2013convergence}.

K\L{}-based asymptotic convergence rates were established for constant Heavy-ball parameters \citep{ochs2018local}. Asymptotic convergence rates based on the K\L{} property were also shown \citep{johnstone2017convergence} for a general algorithm solving nonconvex nonsmooth optimization problems called Multi-step Inertial Forward-Backward splitting \citep{liang2016multi} which has iPiano and Heavy-ball methods as special cases. In this work, step sizes and momentum parameter vary along the algorithm run and are not supposed constant. However, specific values are chosen and consequently, their analysis does not encompass adaptive step sizes i.e. stepsizes that can possibly depend on past gradient information. In the present work, we establish similar convergence rates for methods such as \textsc{adam} under a bounded step size assumption (see~\Cref{thm:rates}). We also mention \cite{li2017convergence} which analyzes the accelerated proximal gradient method for nonconvex programming (\textsc{APG}nc) and establishes convergence rates of the function value sequence by exploiting the K\L{} property. This algorithm is a descent method i.e. the function value sequence is shown to decrease over time. In the present work, we analyze adaptive algorithms which are not descent methods. Note that even Heavy-ball is not a descent method. Hence, our analysis requires additional treatments to exploit the K\L{} property : we introduce a suitable Lyapunov function which is not the objective function. We also point out the recent work \citet{xie2019linear} which analyzes the \textsc{AdaGrad-Norm} algorithm under the global Polyak-\L{}ojasiewicz condition. This condition is a particular case of the K\L{} property (see \Cref{sec:KLrates}).\\

\noindent\textbf{Theoretical guarantees for \textsc{Adam}-like algorithms.}
The recent literature on adaptive optimization algorithms is vast.
For instance, for \textsc{AdaGrad}-like algorithms, several works cover
the %convex setting \citep{levy2017online,levy2018online} and the
nonconvex setting \citep{wu2018wngrad,ward2019adagrad,xie2019linear,liorabona19adagrad}.
In the following, we almost exclusively focus on \textsc{Adam}-like algorithms which are
different because of the momentum.
The first type of convergence results uses the online
optimization framework which controls the convergence rate of the average regret.
This framework was adopted for \textsc{AmsGrad, AdamNC} \citep{j.2018on}, \textsc{AdaBound}
and \textsc{AmsBound} \citep{luo2018adaptive}.
In this setting, it is assumed that the feasible set containing
the iterates is bounded by adding a projection step to the algorithm if needed.
We do not make such an assumption in our analysis. \citep{j.2018on} establishes
a regret bound in the convex setting.

The second type of theoretical results is based on the control of the norm
of the (stochastic) gradients.
We remark that some of these results depend on the dimension of the parameters.
\citet{zhou2018convergence} improves this dependency in comparison to \citet{chen2018convergence}.
The convergence result in \citet{basu2018convergence} is established under quite
specific values of $a_{n+1}, b_n$ and $\epsilon$.
\citet{zaheer2018adaptive} show a $O(1/n)$ convergence rate for an increasing mini-batch
size. However, the proof is provided for \textsc{RMSprop} and seems difficult to adapt
to \adam\ which involves a momentum term. Indeed, unlike \textsc{RMSProp}, \adam\ does not admit the objective function
as a Lyapunov function.

We also remark that all the available theoretical results assume boundedness of
the (stochastic) gradients. We do not make such an assumption.
Furthermore, we do not add any decreasing $1/\sqrt{n}$ factor in front of
the adaptive step size as it is considered in \citet{j.2018on,luo2018adaptive} and
\citet{chen2018convergence}. Although constant hyperparameters $b$ and $c$ are
used in practice, theoretical results are often established for non constant $b_n$ and $c_n$
\citep{j.2018on,luo2018adaptive}.
We also mention that most of the theoretical bounds depend on the dimension of the parameter
\citep{j.2018on,zhou2018convergence,chen2018closing,zou2019sufficient,chen2018convergence,luo2018adaptive}.\\

\noindent\textbf{Other variants of \adam\,.}
Recently, several other algorithms were proposed in the literature to enhance \adam\,.
Although these algorithms lack theoretical guarantees,
they present interesting ideas and show good practical performance. For instance,
\textsc{AdaShift} \citep{zhou2018adashift} argues that the convergence issue of \adam\ is due
to its unbalanced step sizes.
To solve this issue, they propose to use temporally shifted gradients to compute
the second moment estimate in order to decorrelate it from the first moment estimate.
\textsc{Nadam} \citep{dozat2016incorporating} incorporates Nesterov's acceleration into \adam\, in order
to improve its speed of convergence. Moreover, originally motivated by variance reduction,
\textsc{QHAdam} \citep{ma2018quasihyperbolic} replaces both \adam's moment estimates
by quasi-hyperbolic terms and recovers \adam\,, \textsc{Rmsprop} and \textsc{Nadam}
as particular cases (modulo the bias correction).
Guided by the same variance reduction principle, \textsc{Radam} \citep{liu2019variance}
estimates the variance of the effective step size of the algorithm
and proposes a multiplicative variance correction to the update rule.\\

\noindent\textbf{Step size bound.}
Perhaps, the closest idea to our algorithm is the recent \textsc{AdaBound}
\citep{luo2018adaptive} which considers a dynamic learning rate bound.
\citet{luo2018adaptive} show that extremely small and large learning rates can
cause convergence issues to \textsc{adam} and exhibit empirical situations where
such an issue shows up. Inspired by the gradient clipping
strategy proposed in \citet{pascanu2013difficulty} to tackle the problem of
vanishing and exploding gradients in training recurrent neural networks
(see \citet{zhang2019gradientclipping} for recent progress),
\citet{luo2018adaptive} apply clipping to the effective step size of the algorithm
in order to circumvent step size instability. More precisely, authors propose
dynamic bounds on the learning rate of adaptive methods
such as \textsc{Adam} or \textsc{AmsGrad} to solve the problem of extreme
learning rates which can lead to poor performance. Initialized respectively
at $0$ and $\infty$, lower and upper bounds both converge smoothly to a constant
final step size following a predetermined formula defined by the user. Consequently,
the algorithm resembles an adaptive algorithm in the first iterations and becomes
progressively similar to a standard \textsc{SGD} algorithm.
Our approach is different : we propose a static bound on the adaptive learning rate
which depends on the Lipschitz constant of the objective function.
This bound stems naturally from our theoretical derivations.

\section{First Order Convergence Rate}
\label{sec:convrate}

\subsection{Deterministic setting}
 Let $(H_n)_{n\geq0}$ be a sequence defined for all $n \in \bN$ by
$H_n \eqdef f(x_n)+ \frac{1}{2b} \ps{a_n,p_n^2}\,.$

We further assume the following step size growth condition.
\begin{assumption}
  \label{hyp:stepsize}
There exists $\alpha > 0$ s.t. $a_{n+1} \leq \frac{a_n}{\alpha}$.
\end{assumption}

Note that this assumption is satisfied for $\textsc{adam}$ with $\alpha = \sqrt{1-c}$ where $c$ is the parameter in~(\ref{adam}).
Unlike in \textsc{AmsGrad} \citep{j.2018on}, the step size $a_n$ is not necessarily nonincreasing. Indeed, $\alpha$ can be strictly
smaller than $1$ in \Cref{hyp:stepsize} as it is the case for \adam\,.

We provide a proof of the following key lemma in \Cref{proof:lemma_descent}.
\begin{lemma}
  \label{lemma:lyap}
Let \Cref{hyp:model,hyp:stepsize} hold true. Then, for all $n \in \bN$, for all $u \in \bR_{+}$,
\begin{equation}
  \label{eq:descent}
H_{n+1} \leq H_n - \ps{a_{n+1}p_{n+1}^2,A_{n+1}} - \frac{b}{2}\ps{a_{n+1}(\nabla f(x_n)-p_n)^2, B \mathbf{1}}\,,
\end{equation}
\begin{align*}
\text{where} \,\,%\quad
A_{n+1} &\eqdef 1-\frac{a_{n+1}L}{2} - \frac{|b-(1-\alpha)|}{2u} - \frac{1-\alpha}{2b}\,,
\text{and} \,\, B &\eqdef 1-  \frac{|b-(1-\alpha)|u}{b} - (1-\alpha)\,.
\end{align*}
\end{lemma}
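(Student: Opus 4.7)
The plan is to produce a descent-style inequality by controlling $f(x_{n+1})-f(x_n)$ with the Lipschitz smoothness of $\nabla f$, then separately controlling $\tfrac{1}{2b}\langle a_{n+1},p_{n+1}^2\rangle-\tfrac{1}{2b}\langle a_n,p_n^2\rangle$ using both the momentum recursion and \Cref{hyp:stepsize}. All manipulations are coordinatewise, so inner products should be thought of as weighted sums with the positive weights $a_{n+1}$.

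First, I would introduce the shorthand $\delta_n \eqdef \nabla f(x_n)-p_n$, so that $p_{n+1}=p_n+b\delta_n$ and $p_n = p_{n+1}-b\delta_n$. Applying the descent lemma for $L$-smooth functions to $x_{n+1}=x_n-a_{n+1}p_{n+1}$ yields
\[
f(x_{n+1})\le f(x_n)-\langle \nabla f(x_n), a_{n+1}p_{n+1}\rangle + \tfrac{L}{2}\langle a_{n+1}^2,p_{n+1}^2\rangle.
\]
Writing $\nabla f(x_n)=p_n+\delta_n$ and then substituting $p_n = p_{n+1}-b\delta_n$ converts the linear term into $-\langle a_{n+1},p_{n+1}^2\rangle-(1-b)\langle a_{n+1}p_{n+1},\delta_n\rangle$; the Lipschitz term combines with $-\langle a_{n+1},p_{n+1}^2\rangle$ to give the $1-\tfrac{a_{n+1}L}{2}$ contribution in $A_{n+1}$.

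Next I would handle the momentum term by the splitting
\[
\tfrac{1}{2b}\langle a_{n+1},p_{n+1}^2\rangle-\tfrac{1}{2b}\langle a_n,p_n^2\rangle = \tfrac{1}{2b}\langle a_{n+1},p_{n+1}^2-p_n^2\rangle + \tfrac{1}{2b}\langle a_{n+1}-a_n,p_n^2\rangle.
\]
From $p_{n+1}=p_n+b\delta_n$ the identity $p_{n+1}^2-p_n^2=2bp_{n+1}\delta_n - b^2\delta_n^2$ yields $\tfrac{1}{2b}\langle a_{n+1},p_{n+1}^2-p_n^2\rangle = \langle a_{n+1}p_{n+1},\delta_n\rangle - \tfrac{b}{2}\langle a_{n+1},\delta_n^2\rangle$. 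For the second piece, \Cref{hyp:stepsize} gives $a_{n+1}-a_n\le(1-\alpha)a_{n+1}$ coordinatewise, and since $p_n^2\ge0$ this produces $\tfrac{1-\alpha}{2b}\langle a_{n+1},p_n^2\rangle$ as an upper bound. I would then re-express $p_n^2=p_{n+1}^2-2bp_{n+1}\delta_n+b^2\delta_n^2$ to rewrite everything in terms of $p_{n+1}$ and $\delta_n$; this step provides both the $-\tfrac{1-\alpha}{2b}$ contribution (inside $A_{n+1}$) and the $(1-\alpha)$ subtraction in $B$, and collects the cross term to total coefficient $(b-(1-\alpha))\langle a_{n+1}p_{n+1},\delta_n\rangle$.

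Finally I would dispose of that remaining cross term by Young's inequality applied coordinatewise with weights $a_{n+1,k}>0$: for any $u>0$,
\[
|\langle a_{n+1}p_{n+1},\delta_n\rangle|\le \tfrac{1}{2u}\langle a_{n+1},p_{n+1}^2\rangle+\tfrac{u}{2}\langle a_{n+1},\delta_n^2\rangle,
\]
so the cross term contributes $\tfrac{|b-(1-\alpha)|}{2u}$ to the $p_{n+1}^2$ coefficient and $\tfrac{|b-(1-\alpha)|u}{2}$ to the $\delta_n^2$ coefficient. Collecting everything into the form $-\langle a_{n+1}p_{n+1}^2,A_{n+1}\rangle - \tfrac{b}{2}\langle a_{n+1}(\nabla f(x_n)-p_n)^2,B\mathbf 1\rangle$ recovers the stated expressions for $A_{n+1}$ and $B$.

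The main obstacle I anticipate is the careful algebraic bookkeeping: because $a_{n+1}$ is a vector, the Lipschitz term $\tfrac{L}{2}\|a_{n+1}p_{n+1}\|^2$ must be rewritten as $\tfrac{L}{2}\langle a_{n+1}p_{n+1}^2, a_{n+1}\rangle$ so that it merges with the $\langle a_{n+1},p_{n+1}^2\rangle$ terms to give the $-\tfrac{a_{n+1}L}{2}$ term inside $A_{n+1}$ (an expression that only makes sense coordinatewise). Keeping track of signs in Young's inequality (which accounts for the absolute value on $b-(1-\alpha)$) and ensuring that the substitution of $p_n$ in terms of $p_{n+1},\delta_n$ is done consistently across both the smoothness bound and the momentum-Lyapunov expansion are the places where errors are most likely to creep in.
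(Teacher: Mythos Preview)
Your proposal is correct and follows essentially the same route as the paper: the descent lemma for $L$-smooth $f$, the splitting $\tfrac{1}{2b}\langle a_{n+1},p_{n+1}^2-p_n^2\rangle+\tfrac{1}{2b}\langle a_{n+1}-a_n,p_n^2\rangle$, the bound $a_{n+1}-a_n\le(1-\alpha)a_{n+1}$ from \Cref{hyp:stepsize}, the re-expression of $p_n^2$ via $p_n=p_{n+1}-b\delta_n$, and Young's inequality on the resulting $(b-(1-\alpha))$ cross term. The only cosmetic difference is that the paper first combines the gradient inner product with the momentum piece into $-\langle a_{n+1}p_{n+1},p_n\rangle$ before substituting $p_n=p_{n+1}-b\delta_n$, whereas you substitute directly in the gradient term; the resulting algebra is identical.
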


\noindent We now state one of the principal convergence results about Algorithm~\ref{algo}. In particular, we establish a sublinear convergence rate for the minimum of the gradients norms until time~$n$.
\begin{theorem}
  \label{thm:deterministic}
  Let \Cref{hyp:model,hyp:stepsize} hold true. Suppose that $1-\alpha < b \leq 1$.
  Let $\varepsilon >0$ s.t. $a_{\sup} \eqdef \frac{2}{L}\left( 1- \frac{(b- (1-\alpha))^2}{2b\alpha} - \frac{1-\alpha}{2b} - \varepsilon\right)$ is nonnegative.
  Let $\delta >0$ s.t. for all $n \in \bN$,
  \begin{equation}
    \label{hyp:stepsize_bound}
    \delta \leq a_{n+1} \leq \min \left(a_{\sup},\frac{a_n}{\alpha}\right)\,.
  \end{equation}
  Then, the sequence $(H_n)$ is nonincreasing and $\sum_n \|p_n\|^2 < \infty$.
  In particular, $\lim x_{n+1}-x_n \to 0$ and $\lim \nabla f(x_n) \to 0$ as $n \rightarrow +\infty$.
  Moreover, for all $n\geq 1$,
  \begin{equation*}
    \min_{0\leq k \leq n-1} \|\nabla f(x_k)\|^2 \leq \frac{4}{nb^2} \left(\frac{H_0 - \inf f}{\delta \varepsilon}+\|p_0\|^2\right).
  \end{equation*}

\end{theorem}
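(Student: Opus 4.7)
The entire argument is driven by Lemma~\ref{lemma:lyap}; the proof reduces to a clever choice of the free parameter $u$ there, followed by telescoping and a standard passage from $\|p_n\|$ to $\|\nabla f(x_n)\|$.

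First, I would specialize Lemma~\ref{lemma:lyap} by choosing
$u = \frac{b\alpha}{b-(1-\alpha)}$, which is positive because $1-\alpha<b\le 1$. A direct substitution gives $B = \alpha - \frac{(b-(1-\alpha))u}{b} = 0$, so the last term in \eqref{eq:descent} is non-negative and may simply be dropped. For the same $u$, $\frac{b-(1-\alpha)}{2u} = \frac{(b-(1-\alpha))^2}{2b\alpha}$, so using the upper bound $a_{n+1}\le a_{\sup}$ yields
\[
A_{n+1} \;\ge\; \frac{(b-(1-\alpha))^2}{2b\alpha} + \frac{1-\alpha}{2b} + \varepsilon - \frac{(b-(1-\alpha))^2}{2b\alpha} - \frac{1-\alpha}{2b} \;=\; \varepsilon,
\]
coordinate-wise. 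Combined with $a_{n+1}\ge \delta$, the descent inequality collapses to the clean bound $H_{n+1} \le H_n - \varepsilon\,\delta\,\|p_{n+1}\|^2$.

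From here, $(H_n)$ is nonincreasing, and since $H_n \ge f(x_n) \ge \inf f$ (because $a_n>0$, $b>0$), telescoping gives
\[
\varepsilon\,\delta\sum_{k=0}^{n-1}\|p_{k+1}\|^2 \;\le\; H_0 - H_n \;\le\; H_0 - \inf f,
\]
so $\sum_n\|p_n\|^2<\infty$ and in particular $\|p_n\|\to 0$. Then $\|x_{n+1}-x_n\| = \|a_{n+1}p_{n+1}\| \le a_{\sup}\|p_{n+1}\|\to 0$. For the gradient, rewrite the momentum recursion as $b\,\nabla f(x_n) = p_{n+1} - (1-b)\,p_n$; since $\|p_n\|,\|p_{n+1}\|\to 0$, this yields $\nabla f(x_n)\to 0$.

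For the quantitative rate, I would apply $(x+y)^2\le 2x^2+2y^2$ to the identity $b\,\nabla f(x_n) = p_{n+1}-(1-b)p_n$, and use $(1-b)^2 \le 1$ (which holds since $0\le 1-b < \alpha \le 1$), obtaining
\[
b^{2}\|\nabla f(x_k)\|^2 \;\le\; 2\|p_{k+1}\|^2 + 2\|p_k\|^2.
\]
Summing for $k=0,\dots,n-1$ and bounding $\sum_{k=0}^{n-1}\|p_k\|^2 + \sum_{k=0}^{n-1}\|p_{k+1}\|^2 \le 2\|p_0\|^2 + 2\sum_{k=1}^{n}\|p_k\|^2 \le 2\|p_0\|^2 + \frac{2(H_0-\inf f)}{\varepsilon\delta}$ and then dividing by $n$ delivers the announced $\frac{4}{nb^2}\bigl(\frac{H_0-\inf f}{\delta\varepsilon}+\|p_0\|^2\bigr)$ bound on $\min_{0\le k\le n-1}\|\nabla f(x_k)\|^2$.

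The only non-routine step is the choice of $u$ balancing the two cross terms: the assumption $1-\alpha<b$ is exactly what makes $u>0$ admissible and what makes the constant $a_{\sup}$ strictly positive for $\varepsilon$ small enough. Everything else is telescoping and a standard $(a+b)^2\le 2a^2+2b^2$ estimate.
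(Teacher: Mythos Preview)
Your proof is correct and follows essentially the same route as the paper: the identical choice $u=\frac{b\alpha}{b-(1-\alpha)}$ in Lemma~\ref{lemma:lyap} to force $B=0$ and $A_{n+1}\ge\varepsilon$, then telescoping to bound $\sum\|p_{k+1}\|^2$, and finally the identity $b\,\nabla f(x_k)=p_{k+1}-(1-b)p_k$ together with $(x+y)^2\le 2x^2+2y^2$ to pass to gradients. The only cosmetic remark is that your justification of $(1-b)^2\le1$ via ``$0\le1-b<\alpha\le1$'' is unnecessary (and $\alpha\le1$ is not assumed): $0<b\le1$ already gives it directly.
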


\noindent\textbf{Sketch of the proof.} The key element of the proof is \lemmaref{lemma:lyap}
which is a descent lemma on the function $H$. Indeed, the assumptions of the theorem
guarantee that $A_{n+1} \geq \varepsilon$ and $B \geq 0$. Then, the result stems from summing
the inequalities of \lemmaref{lemma:lyap}. The proof can be found in \Cref{proof:deterministic}.\\

We provide some comments on this result.

\noindent\textbf{Dimension dependence.} Unlike most of the theoretical results for variants of \adam\ as gathered in
\Cref{tab:theoretical_guarantees}, we remark that the bound does not depend on the dimension $d$ of the parameter $x_k$.\\

\noindent\textbf{Comparison to gradient descent.} A similar result holds for deterministic gradient descent (see \citet[p.28]{nesterov2004book}).
% and \citet{cartis2010complexity}).
If $\gamma$ is a fix step size
for gradient descent and there exist $\delta>0, \varepsilon>0$ s.t. $\gamma > \delta$ and $1- \frac{\gamma L}{2} > \varepsilon$, then (see \Cref{proof:grad_descent}) for all $n\geq 1$:
\begin{equation*}
 \min_{0\leq k \leq n-1} \|\nabla f(x_k)\|^2 \leq \frac{f(x_0) - \inf f}{n \gamma (1- \frac{\gamma L}{2})}
  \leq  \frac{f(x_0) - \inf f}{n\delta \varepsilon}.
\end{equation*}
When $p_0=0$ (this is the case for \adam\,), the bound in \Cref{thm:deterministic}
coincides with the gradient descent bound, up to the constant $4/b^2$.
We mention however that $\varepsilon$ for Algorithm~(\ref{algo}) is defined by a
slightly more restrictive condition than for gradient descent : when $b=1$,
there is no momentum and $a_{\sup} = \frac{1}{L}(1-2\varepsilon) < 2/L$.
Hence, under the boundedness of the effective step size, the algorithm has
a similar convergence guarantee to gradient descent.
Remark that the step size bound almost matches the classical $2/L$ upperbound on the step size
of gradient descent (see for example \citet[Theorem~2.1.14]{nesterov2004book}).
% As it is already known for gradient descent, a large step size, even if it is adaptive,
% can harm the convergence of the algorithm.
% Note moreover that gradient descent
% with step size $1/L$ is up to a constant optimal for optimizing smooth nonconvex functions
% in terms of worst case complexity \citep{carmon2019lower}.
\\

\noindent\textbf{Stepsize bound.} Condition~\ref{hyp:stepsize_bound} should be seen as a clipping step of the algorithm.
Indeed, the lower bound on the effective stepsize  has not to be verified a posteriori after running the algorithm.
Instead, a clipping of the learning rate would ensure that this boundedness assumption holds. Furthermore, if we drop
the lower bound assumption on the effective step size $a_n$ from \Cref{thm:deterministic}, we still get the following result
(see \Cref{prop:cv-a_n}), for all $n\geq 1$,
\begin{equation*}
  %\resizebox{.99\hsize}{!}{$
  \frac{1}{n}\sum_{k=0}^{n-1} \ps{a_{k+1},\nabla f(x_k)^2} \leq \frac{2(1+\alpha)}{n b^2\alpha} \left(\frac{H_0 - \inf f}{\varepsilon}+ \ps{a_0, p_0^2}\right).
  %$}
\end{equation*}

\noindent\textbf{Influence of $\varepsilon$ and $\delta$.}
In the specific case of \textsc{Adam}, we obtain $La_{\sup}/2 + \varepsilon=0.93$ with the recommended default parameters $b= 0.1$ and $c = 0.001$. Hence, we can choose $\varepsilon$ of the order of $0.1$ without exceeding $0.93$. In view of Equation~\eqref{hyp:stepsize_bound_stoch}, the smaller is $\varepsilon$ and the larger will be the stepsizes. However, a small $\varepsilon$ deteriorates the bounds of Theorems~\ref{thm:deterministic} and~\ref{thm:stochastic}. Once $b$, $c$ (and then $\alpha$) are fixed, $\varepsilon$ can be seen as a constant. The clipping parameter $\delta$ can also be seen as constant once it is chosen.

\subsection{Stochastic setting}
We establish a similar bound in the stochastic setting.
Note that the control of the minimum of the gradients norms is also standard in nonconvex
stochastic optimization literature (see for example \citet{ghadimi2013stochastic}). %,lei2019sgdnonconvex}) .
Let $(\Xi,\mathfrak{S})$ denote a measurable space and $d \in \bN$.
Consider the problem of finding a local minimizer of the expectation
$F(x)\eqdef\bE(f(x,\xi))$ w.r.t. $x\in \bR^d$, where $f:\bR^d\times \Xi\to \bR$ is
a measurable map and $f(\,.\,,\xi)$ is a possibly nonconvex function depending
on some random variable~$\xi$. The distribution of $\xi$ is assumed to be unknown,
but revealed online by the observation of iid copies $(\xi_n:n\geq 1)$ of the r.v. $\xi$.
For a fixed value of $\xi$, the mapping $x\mapsto f(x,\xi)$ is
supposed to be differentiable, and its gradient w.r.t. $x$ is denoted
by $\nabla f(x,\xi)$. We study a stochastic version of Algorithm~(\ref{algo}) by
replacing the deterministic gradient $\nabla f(x_n)$ by $\nabla f(x_n, \xi_{n+1})$.

\begin{theorem}
  \label{thm:stochastic}
  Let \Cref{hyp:model} (for $F$) and \Cref{hyp:stepsize} hold true. Assume the following bound on the variance in stochastic gradients:
  $\bE \|\nabla f(x,\xi)-\nabla F(x)\|^2  \leq \sigma^2$ for all $x \in \bR^d$. Suppose moreover that $1-\alpha < b \leq 1$.
  Let $\varepsilon >0$ s.t. $\bar a_{\sup} \eqdef \frac{2}{L}\left( \frac{3}{4}- \frac{(b- (1-\alpha))^2}{2b\alpha} - \frac{1-\alpha}{2b} - \varepsilon\right)$ is nonnegative.
  Let $\delta >0$ s.t. for all $n \geq 1$, almost surely,
  \begin{equation}
    \label{hyp:stepsize_bound_stoch}
    \delta \leq a_{n+1} \leq \min \left(\bar a_{\sup},\frac{a_n}{\alpha}\right)\,.
  \end{equation}
  Then,
  \begin{equation*}
  %\resizebox{.99\hsize}{!}{$
  \bE[\|\nabla F(x_\tau)\|^2] \leq \frac{4}{n b^2} \left(\frac{H_0 - \inf f}{\delta \varepsilon}+\|p_0\|^2\right) + \frac{4\bar a_{\sup}}{\delta \varepsilon b^2} \sigma^2,
  %$}
\end{equation*}
where $x_\tau$ is an iterate uniformly randomly chosen from $\{x_0, \cdots, x_{n-1}\}$.
\end{theorem}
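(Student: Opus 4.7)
My plan is to mirror the proof strategy of \Cref{thm:deterministic} while propagating the stochastic noise $\eta_n \eqdef \nabla f(x_n,\xi_{n+1}) - \nabla F(x_n)$ through every step. Let $\mathcal{F}_n$ denote the $\sigma$-algebra generated by $\xi_1,\ldots,\xi_n$, so that $\bE[\eta_n\mid\mathcal{F}_n]=0$ and $\bE[\|\eta_n\|^2\mid\mathcal{F}_n]\leq\sigma^2$. I would define $H_n \eqdef F(x_n) + \frac{1}{2b}\ps{a_n,p_n^2}$, apply the $L$-smoothness of $F$ (rather than of the sample functions $f(\cdot,\xi)$), and reproduce the algebraic derivation of \Cref{lemma:lyap} with $g_n = \nabla F(x_n) + \eta_n$ substituted for $\nabla f(x_n)$ in the momentum update.

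The central technical difficulty is that $a_{n+1}$ is itself a function of $\xi_{n+1}$ (through $v_{n+1}$ for \textsc{Adam}), so cross terms of the form $\ps{a_{n+1}, w_n \eta_n}$ do not vanish in conditional expectation. To handle them, I would apply Young's inequality coordinatewise: for any deterministic coordinate weight $w_i$,
$$
(a_{n+1})_i\, w_i\, (\eta_n)_i \;\leq\; \frac{1}{4}(a_{n+1})_i\, w_i^2 \;+\; (a_{n+1})_i\, (\eta_n)_i^2.
$$
The first piece is absorbed into the existing negative terms $-\ps{a_{n+1}p_{n+1}^2, A_{n+1}}$ and $-\frac{b}{2}\ps{a_{n+1}(\nabla F(x_n)-p_n)^2, B\mathbf{1}}$ of the deterministic descent inequality. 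This absorption costs exactly $1/4$ of the coefficient budget and is what replaces the constant $1$ appearing in $a_{\sup}$ by $3/4$ in $\bar a_{\sup}$. The second piece is pointwise dominated by $\bar a_{\sup}\|\eta_n\|^2$ and, after conditional expectation, contributes a noise term of size at most $\bar a_{\sup}\sigma^2$. The outcome is a stochastic descent inequality of the form
$$
\bE[H_{n+1}\mid\mathcal{F}_n] \;\leq\; H_n \,-\, \varepsilon\,\ps{a_{n+1}, p_{n+1}^2} \,+\, \bar a_{\sup}\,\sigma^2.
$$

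Taking total expectations, telescoping this inequality over $k=0,\ldots,n-1$, and invoking the lower bound $a_{k+1}\geq\delta$ yields $\sum_{k=0}^{n-1}\bE\|p_{k+1}\|^2 \leq \bigl(H_0 - \inf F + n\,\bar a_{\sup}\sigma^2\bigr)/(\delta\varepsilon)$. To convert this momentum bound into a gradient-norm bound I would exploit the conditional identity $b\,\nabla F(x_k) = \bE[p_{k+1}\mid\mathcal{F}_k] - (1-b)p_k$, which by Jensen's inequality gives
$$
b^2\|\nabla F(x_k)\|^2 \;\leq\; \bE[\|p_{k+1}-(1-b)p_k\|^2\mid\mathcal{F}_k] \;\leq\; 2\,\bE[\|p_{k+1}\|^2\mid\mathcal{F}_k] + 2\|p_k\|^2.
$$
This conditional-expectation trick is essential: it prevents a second $\sigma^2$ contribution that would otherwise arise if one were to expand $b\,\nabla F(x_k) = p_{k+1}-(1-b)p_k - b\eta_k$ pathwise and then apply the triangle inequality. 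Summing over $k$, dividing by $n$, and combining with the telescoped descent inequality delivers the stated bound. The main obstacle is establishing the stochastic descent lemma with the right constants, since the coordinate-wise Young parameters have to be tuned so that the absorption produces exactly the tight $3/4$ factor; once this is in hand, the rest of the argument is essentially a transcription of the deterministic proof.
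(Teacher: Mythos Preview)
Your proposal is correct and follows essentially the same route as the paper: isolate the single residual cross term $\ps{\eta_n, a_{n+1}p_{n+1}}$ in the stochastic version of \Cref{lemma:lyap}, bound it pathwise via Young's inequality (which shifts the leading $1$ to $3/4$ and produces the $\bar a_{\sup}\sigma^2$ contribution), telescope, and convert $\sum\bE\|p_k\|^2$ to a gradient bound via the momentum relation and Jensen. The only cosmetic slip is that your displayed inequality $\bE[H_{n+1}\mid\mathcal{F}_n]\leq H_n - \varepsilon\ps{a_{n+1},p_{n+1}^2}+\bar a_{\sup}\sigma^2$ has a non--$\mathcal{F}_n$-measurable term on the right; the paper avoids this by passing directly to total expectations, which is what your subsequent telescoping step implicitly does anyway.
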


\begin{remark}
We recover the deterministic bound of \Cref{thm:deterministic}
when the gradients are noiseless ($\sigma = 0$).
The complete proof is deferred to \Cref{proof:stochastic}.
\end{remark}

Before proceeding, a few remarks are in order.

\noindent\textbf{SGD as a particular case.} By setting $b=1$ (no momentum) and $a_{n+1} = a_n$ for all
$n$ which implies $\alpha =1$, we recover a known rate for nonconvex \textsc{SGD} \citep{ghadimi2013stochastic} %,lei2019sgdnonconvex}
with a maximal stepsize here of $\bar a_{\sup} = \frac{1}{2L}(1-2\varepsilon)$ and note that the proof
can be slightly modified to make  $\bar a_{\sup}$ as close as possible to $1/L$. We highlight though
that the Lyapunov function $H$ was especially tailored to handle a momentum algorithm
and an analysis with $f$ as a Lyapunov function is largely satisfying for \textsc{SGD}.\\

\noindent\textbf{\textsc{RMSProp}.}
In the particular case where there is no momentum in the algorithm (i.e. \textsc{RMSProp}) and assuming that
the gradients are bounded, a similar convergence rate is obtained in \citet[Thm. 1]{zaheer2018adaptive}
(see \Cref{variants_adam}). Furthermore, although we assume boundedness of the step size
by Condition~(\ref{hyp:stepsize_bound_stoch}), we do not suppose
that $a_1 \leq \frac{\epsilon}{2L}$ (see table in \Cref{variants_adam}).
The latter assumption imposes a very small step size
($\epsilon = 10^{-8}$ in \citet{kingma2014adam}) which may result in a slow convergence.\\

\noindent\textbf{Stepsize lower bound.}
In the case of \adam\,($a_n = \frac{a}{\epsilon + \sqrt{v_n}}$), the uniform
lower bound $a_{n+1} \geq \delta$ prevents the exponential moving average $v_n$ of the
squared gradients from exploding. This can be guaranteed on the fly by a clipping of $a_n$.
If we drop the uniform lower bound on the effective step size,
we still obtain the following result (see Appendix.~\Cref{remark:without_lower_bound})
\begin{equation*}
%\resizebox{1\hsize}{!}{$
\bE\left[\sum_{k=0}^{n-1}  \ps{a_{k+1}, \nabla f(x_k,\xi_{k+1})^2}\right] \leq \frac{2(1+\alpha)}{b^2 \alpha} \left(\frac{H_0 - \inf f}{\varepsilon} + \ps{a_0,p_0^2} + \frac{n \bar a_{\sup} \sigma^2}{\varepsilon}\right)\,.
%$\,.}
\end{equation*}

\noindent\textbf{Influence of the momentum parameter.}
Note that $\varepsilon$ depends on the momentum parameter $b$ and consequently
the bound does not decrease with $b$. The influence of this parameter is more complex.

\section{Convergence Analysis under the K\L{} Property}
\label{sec:KLrates}

Historically introduced by the fundamental works of \citet{lojasiewicz1963propriete} and \citet{kurdyka1998gradients},
the K\L{} inequality is the key tool of our analysis. We refer to \citet{bolte2010characterizations} for an in-depth presentation of this property.
The K\L{} inequality is satisfied by a broad class of functions including most nonsmooth deep neural networks.
More precisely, as exposed in \citet[Section~5.2, Corollary~5.11]{davis2018stochastic} and \citet[Section~2.2]{castera2019inertial},
feedforward neural networks with arbitrary number of layers of arbitrary dimensions, with activations such as sigmoid, ReLU, leaky ReLU, tanh, softplus (and many others), %see \citet[Section~2.2]{castera2019inertial}),
 with a loss function such as $l_p$ norm, hinge loss, logistic loss or cross entropy (and many others), belong to this class of so-called \emph{definable} functions in an \emph{o-minimal structure} \citep{kurdyka1998gradients,attouch2010proximal,davis2018stochastic}. We refer the interested reader to \citet[Section~3, Section~C]{zeng19aglobalconvergenceBCD} for general conditions for which K\L{} inequality holds in the context of deep neural networks training models.
The class of \emph{definable} functions is stable under all the typical functional operations in optimization (e.g. sums, compositions, inf-projections) and generalizes the class of semialgebraic functions including objective functions such as $\|\cdot\|_p$ for $p$ rational, real polynomials, rank, etc. (see \citet[Appendix]{bolte2014proximal}).

The K\L{} inequality has been used to show the convergence of several first-order optimization methods towards critical points %\citep{attouch2009convergence,attouch2010proximal,attouch2013convergence,bolt%e2014proximal,frankel2015splitting,li2017convergence}.
\citep{attouch2009convergence,attouch2010proximal,bolte2014proximal,li2017convergence}.
In this section, we use a methodology exposed in \citet[Appendix]{bolte2018first} to show convergence rates based on the K\L{} property.
Recently developed in \cite{bolte2014proximal}, this abstract convergence mechanism can be used for any \textit{descent} type algorithm.
We modify it to encompass momentum methods. Note that although this modification was initiated in \citet{ipiano,ochs2018local}, we use
a different separable Lyapunov function. The first part of the proof
follows these approaches and the second part follows the proof of \citet[Theorem 2]{johnstone2017convergence}.

Consider the function $H : \bR^d \times \bR^d \to \bR$ defined for all $z=(x,y) \in \bR^d \times \bR^d$ by
\begin{equation}
  \label{def:H}
  H(z) = H(x,y) = f(x) + \frac{1}{2b}\|y\|^2\,.
\end{equation}
Notice that $H_n = f(x_n) + \frac{1}{2b} \ps{a_n,p_n^2} = H(x_n,y_n)$ where $(y_n)_{n \in \bN}$ is defined for all $n \in \bN$ by $y_n = \sqrt{a_n} p_n$.

\noindent{\textbf{Notations and definitions.}} If $(E,\mathsf d)$ is a metric space, $z\in E$ and
$A$ is a non-empty subset of $E$, we use the notation
$\mathsf d(z,A) \eqdef \inf\{\mathsf d(z,z'):z'\in A\}$\,.
The set of critical points of the function $H$ is defined by
$\text{crit}\, H \eqdef \{ z \in \bR^{2d} \, \text{s.t.}\, \nabla H(z) = 0\}\,.$

\begin{assumption}
  \label{hyp:coercivity}
  $f$ is coercive, that is
   $%\[
   %f(x) \to +\infty \quad\text{as}\quad \|x\| \to +\infty.
   f(x) \to +\infty \,\text{as}\, \|x\| \to +\infty.
   $%\]
\end{assumption}

\Cref{hyp:coercivity} will be particularly useful to ensure that the sequence of the iterates
$(z_k)_{k \geq 0}$ of Algorithm~(\ref{algo}) is bounded. Indeed, a coercive function has compact
level sets and \lemmaref{lemma:lyap} will guarantee that the iterates lie in a level
 set of the function $H$.

 We now introduce the limit point set of the sequence $(z_k)_{k \geq 0}$ and
 exhibit some of its properties.

 \begin{definition}{\textbf{(Limit point set)}}
 The set of all limit points of $(z_k)_{k \in \bN}$ initialized at $z_0$ is defined by
   $$
   \omega (z_0) \eqdef \{ \bar z \in \bR^{2d} : \exists \, \text{an increasing sequence of integers}\, (k_j)_{j \in \bN} \, \text{s.t}\, z_{k_j} \to \bar z \,\text{as}\, j \to \infty \}\,.
   $$
 \end{definition}

\begin{lemma}{\textbf{(Properties of the limit point set)}}
\label{lemma:subseq}
 Let $(z_k)_{k \in \bN}$ be the sequence defined for all $k \in \bN$ by $z_k = (x_k,y_k)$
 where $y_k = \sqrt{a_k}p_k$ and $(x_k,p_k)$ is generated by Algorithm~(\ref{algo}) from a starting point $z_0$.
  Let \Cref{hyp:model,hyp:stepsize,hyp:coercivity} hold true. Assume that Condition~(\ref{hyp:stepsize_bound}) holds. Then,
 \begin{enumerate}[(i),noitemsep]
   \item $\omega(z_0)$ is a nonempty compact set.
   \item $\omega(z_0) \subset \text{crit} H = \text{crit} f \times \{0\}$\,.
   \item $\lim\limits_{k \rightarrow +\infty} \sd(z_k, \omega(z_0)) = 0$.
   \item $H$ is finite and constant on $\omega(z_0)$.
 \end{enumerate}
\end{lemma}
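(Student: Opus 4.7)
The plan is to set up the boundedness of the sequence $(z_k)$ first, which gives (i) and (iii) immediately, and then to extract the structural information from \Cref{thm:deterministic} (namely $\sum_n\|p_n\|^2<\infty$, the monotonicity of $H_n$, and $\nabla f(x_n)\to 0$) together with the continuity of $\nabla f$ and $H$ to obtain (ii) and (iv).

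\textbf{Step 1: boundedness.} By \Cref{thm:deterministic}, $(H_n)$ is nonincreasing, hence $f(x_n)\le H_n\le H_0$ (since the second term in the definition of $H_n$ is nonnegative thanks to $a_n>0$). \Cref{hyp:coercivity} then forces $(x_n)$ to remain in the compact level set $\{f\le H_0\}$, so $(x_n)$ is bounded. Condition~\eqref{hyp:stepsize_bound} gives $\delta\le a_n\le a_{\sup}$ for all $n\ge 1$, so $(a_n)$ is bounded. Moreover $\sum_n\|p_n\|^2<\infty$ from \Cref{thm:deterministic} yields $p_n\to 0$, and therefore $y_n=\sqrt{a_n}\,p_n\to 0$. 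In particular $(z_n)=(x_n,y_n)$ is bounded.

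\textbf{Step 2: (i) and (iii).} Since $(z_k)$ is bounded, Bolzano--Weierstrass gives $\omega(z_0)\neq\emptyset$. Writing $\omega(z_0)=\bigcap_{N\ge 0}\overline{\{z_k:k\ge N\}}$ displays it as an intersection of closed sets contained in a compact ball, so it is compact. For (iii), assume for contradiction that $\limsup_k \mathsf d(z_k,\omega(z_0))=\eta>0$. Then a subsequence $(z_{k_j})$ satisfies $\mathsf d(z_{k_j},\omega(z_0))\ge\eta/2$; by boundedness it has a further convergent sub-subsequence whose limit lies in $\omega(z_0)$ by definition, contradicting $\mathsf d(z_{k_j},\omega(z_0))\ge\eta/2$.

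\textbf{Step 3: (ii).} Let $\bar z=(\bar x,\bar y)\in\omega(z_0)$ with $z_{k_j}\to\bar z$. Since $y_n\to 0$ (Step~1), $\bar y=0$. Since $\nabla f$ is continuous (\Cref{hyp:model}) and $\nabla f(x_n)\to 0$ by \Cref{thm:deterministic}, passing to the limit along $x_{k_j}\to\bar x$ yields $\nabla f(\bar x)=0$, i.e.\ $\bar x\in\operatorname{crit} f$. From \eqref{def:H}, $\nabla H(\bar x,\bar y)=(\nabla f(\bar x),\bar y/b)=(0,0)$, so $\bar z\in\operatorname{crit} H$ and the inclusion $\omega(z_0)\subset\operatorname{crit} f\times\{0\}$ follows; the reverse identification $\operatorname{crit} H=\operatorname{crit} f\times\{0\}$ is immediate from the separable form of $H$.

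\textbf{Step 4: (iv).} The sequence $(H_n)$ is nonincreasing (\Cref{thm:deterministic}) and bounded below by $\inf f>-\infty$, hence converges to some finite $H^\star$. Since $H$ is continuous on $\bR^{2d}$ and $H_n=H(z_n)$, for any $\bar z\in\omega(z_0)$ with $z_{k_j}\to\bar z$ we have $H(\bar z)=\lim_j H_{k_j}=H^\star$, which is independent of $\bar z$. Hence $H$ is finite and constant on $\omega(z_0)$.

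The only delicate point is Step~1: one must combine the descent of the Lyapunov function $H_n$ with coercivity of $f$ (not of $H$), which is legitimate precisely because the quadratic term $\tfrac{1}{2b}\langle a_n,p_n^2\rangle$ is nonnegative and thus can be dropped when bounding $f(x_n)$. All other steps are standard Bolzano--Weierstrass arguments, so no additional obstacle is anticipated.
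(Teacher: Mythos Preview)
Your proof is correct and follows essentially the same path as the paper's: boundedness of $(z_k)$ via the descent of $H_n$ and coercivity, then standard Bolzano--Weierstrass arguments for (i) and (iii), convergence of $H_n$ plus continuity of $H$ for (iv). The only noteworthy difference is in (ii): the paper routes through \lemmaref{lemma:conditions}(ii), bounding $\|\nabla H(z_{k+1})\|$ by $\|x_{k+1}-x_k\|+\|x_k-x_{k-1}\|$ and using $x_{k+1}-x_k\to 0$, whereas you invoke $\nabla f(x_n)\to 0$ directly from \Cref{thm:deterministic} together with $y_n\to 0$ and continuity of $\nabla f$ --- this is a slightly more direct argument that bypasses the auxiliary lemma, but both are valid and equivalent in spirit.
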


We introduce the K\L{} inequality in the following. Define
$[\alpha < H < \beta] \eqdef \{ z \in \bR^{2d} \,:\, \alpha < H(z) < \beta\}$\,.
Let $\eta >0$ and define
$\Phi_{\eta}$ as the set of continuous functions $\varphi$ on $[0,\eta)$ which are also
continuously differentiable on $(0, \eta)$, concave and satisfy $\varphi(0)=0$ and $\varphi' > 0$.

\begin{definition}{\textbf{(K\L{} property,\,\citet[Appendix]{bolte2018first})}}
\label{def:KL}
  A proper and lower semicontinuous (l.s.c) function $H : \bR^{2d} \to (-\infty, +\infty]$ has the K\L{} property locally at
  $\bar z \in \text{dom}\,H$ if there exist $\eta >0$, $\varphi \in \Phi_{\eta}$ and a neighborhood $U(\bar z)$ s.t.
  for all $z \in U(\bar z) \cap [H(\bar z) < H < H(\bar z) + \eta]$ :
  \begin{equation}
    \label{eq:kl}
        \varphi'(H(z)-H(\bar z))\,\|\nabla H(z)\| \geq 1\,.
  \end{equation}
\end{definition}

When $H(\bar z)= 0$, we can rewrite \Cref{eq:kl} as : $\|\nabla (\varphi \circ H)(z)\| \geq 1$ for suitable $z$ points. This means that $H$
becomes sharp under a reparameterization of its values through the so-called desingularizing function $\varphi$.

The function $H$ is said to be a K\L{} function if it has the K\L{} property at each point of the domain of its gradient.
Note that this property can be defined for nonsmooth functions using the Clarke subdifferential %\citep{clarke2008nonsmooth}
in order to encompass nonsmooth neural networks.
We limit ourserlves to the simpler differentiable setting.
%for simplicity of exposition.
K\L{} inequality holds at any non critical point (see \citet[Remark 3.2 (b)]{attouch2010proximal}).
We introduce now a uniformized version of the K\L{} property which will be useful for our analysis.

\begin{lemma}{\textbf{(Uniformized K\L{} property,\,\citet[Lemma 6, p 478]{bolte2014proximal})}}
  \label{lemma:unif_KL}
Let $\Omega$ be a compact set and let $H : \bR^{2d} \to (-\infty, +\infty]$ be a proper l.s.c function. Assume that $H$ is constant on $\Omega$ and
satisfies the K\L{} property at each point of $\Omega$. Then, there exist $\varepsilon >0, \eta >0$ and $\varphi \in \Phi_{\eta}$ such that for all
$\bar z \in \Omega$, for all $z \in \{z \in \bR^{d} : \, \sd(z,\Omega)<\varepsilon\} \cap [H(\bar z) < H < H(\bar z) + \eta]$, one has
\begin{equation}
\label{eq:uniformized-KL}
\varphi'(H(z)-H(\bar z))\|\nabla H(z)\| \geq 1
\end{equation}
\end{lemma}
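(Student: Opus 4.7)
The plan is to exploit compactness of $\Omega$ to reduce the pointwise K\L{} hypothesis to a finite collection of local K\L{} descriptions, and then to glue the local desingularizing functions into a single one valid on a tubular neighborhood of $\Omega$. Crucially, the assumption that $H$ is constant on $\Omega$ synchronizes the value shifts $H(\bar z)$ across all points of $\Omega$, which is what makes the ``uniformization'' possible at all.

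First I would fix an arbitrary $\bar z \in \Omega$. The pointwise K\L{} property at $\bar z$ yields constants $\eta_{\bar z} > 0$, an open neighborhood $U_{\bar z}$ of $\bar z$, and a function $\varphi_{\bar z} \in \Phi_{\eta_{\bar z}}$ for which \eqref{eq:kl} holds throughout $U_{\bar z} \cap [H(\bar z) < H < H(\bar z)+\eta_{\bar z}]$. Letting $\bar z$ range over $\Omega$, the collection $\{U_{\bar z}\}_{\bar z \in \Omega}$ covers the compact set $\Omega$, so I extract a finite subcover corresponding to points $\bar z_1,\dots,\bar z_m$, with associated data $\eta_i,\varphi_i,U_i$. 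By a standard Lebesgue-number-type argument, there exists $\varepsilon>0$ such that the open tube $\{z : \sd(z,\Omega) < \varepsilon\}$ is contained in $\bigcup_{i=1}^m U_i$. Setting $\eta \eqdef \min_i \eta_i > 0$ and using the fact that $H$ takes a common value $c$ on $\Omega$ (so that $H(\bar z_i)=c=H(\bar z)$ for every $\bar z \in \Omega$), any sublevel condition $H(\bar z) < H(z) < H(\bar z)+\eta$ in the conclusion rewrites as $c < H(z) < c+\eta \le c+\eta_i$ for each $i$.

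Next I would construct a single desingularizing function. The natural choice is $\varphi \eqdef \varphi_1 + \cdots + \varphi_m$ on $[0,\eta)$; it is continuous on $[0,\eta)$, $C^1$ on $(0,\eta)$, concave as a sum of concave functions, satisfies $\varphi(0)=0$, and $\varphi'>0$ since each $\varphi_i'>0$, so $\varphi \in \Phi_\eta$. Now pick any $\bar z \in \Omega$ and any $z$ in the tube with $c < H(z) < c+\eta$. Since $z \in U_i$ for some $i$, the pointwise K\L{} inequality at $\bar z_i$ yields $\varphi_i'(H(z)-c)\|\nabla H(z)\| \ge 1$. Because $\varphi'(t) = \sum_j \varphi_j'(t) \ge \varphi_i'(t)$ for every $t \in (0,\eta)$, I conclude $\varphi'(H(z)-H(\bar z))\|\nabla H(z)\| \ge 1$, which is \eqref{eq:uniformized-KL}.

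The main obstacle is the construction of a single $\varphi$ that simultaneously serves every point of $\Omega$; this is where combining the local $\varphi_i$ into a function that still lies in $\Phi_\eta$ matters, and where taking the sum (rather than, say, a maximum, which could break differentiability) is the clean fix. A secondary subtlety is verifying that the tube radius $\varepsilon$ can be chosen uniformly so that the tube sits inside the union of the $U_i$; this is handled by compactness (either via the Lebesgue number of the open cover or by a contradiction argument using a convergent subsequence of putative counterexamples). Everything else is essentially bookkeeping once the constant value of $H$ on $\Omega$ is used to align the K\L{} shifts.
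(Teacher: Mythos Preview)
The paper does not actually prove this lemma; it is stated as a citation of \citet[Lemma 6, p.~478]{bolte2014proximal} and used as a black box in the proof of \Cref{thm:rates}. Your argument is correct and is essentially the proof given in the cited reference: finite subcover by compactness, a uniform tube radius via a Lebesgue-number argument, and gluing the local desingularizing functions by summation (which preserves membership in $\Phi_\eta$ and dominates each $\varphi_i'$).
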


\begin{definition}{\textbf{(K\L{} exponent)}}
  If $\varphi$ can be chosen as $\varphi(s) = \frac{\bar c}{\theta}s^\theta$ for some $\bar c > 0$ and $\theta \in (0,1]$\, in \Cref{def:KL},
  then we say that $H$ has the K\L{} property at $\bar z$ with an exponent of~$\theta$\,
  \footnote{$\alpha \eqdef 1-\theta$ is also defined as the K\L{} exponent in other papers \citep{li2018calculus}.}.
  We say that $H$ is a K\L{} function with an exponent $\theta$ if it has the same exponent $\theta$ at any~$\bar z$.
\end{definition}
In the particular case when $\theta = 1/2$, we recover the Polyak-\L{}ojasiewicz condition (see for example \citet{karimi2016linear}) satisfied for strongly convex functions.
Furthermore, if $H$ is a proper closed semialgebraic function, then $H$ is a K\L{} function with a suitable exponent $\theta \in (0,1]$\,.
The slope of $\varphi$ around the origin informs about the "flatness" of a function around a point. Hence, the
K\L{} exponent allows to obtain convergence rates. In the light of this remark, we state one of the main results of this work.

\begin{theorem}{\textbf{(Convergence rates)}}
  \label{thm:rates}
  Let $(z_k)_{k \in \bN}$ be the sequence defined for all $k \in \bN$ by $z_k = (x_k,y_k)$
  where $y_k = \sqrt{a_k}p_k$ and $(x_k,p_k)$ is generated by Algorithm~(\ref{algo}) from a starting point $z_0$.
  Let \Cref{hyp:model,hyp:stepsize,hyp:coercivity} hold true. Assume that Condition~(\ref{hyp:stepsize_bound}) holds.
  Suppose moreover that $H$ is a K\L{} function with K\L{} exponent $\theta$.
  Then, the sequence $(H(z_k))_{k \in \bN}$ converges to $f(x_{*})$ where $x_{*}$ is a critical point of~$f$ and the following convergence rates hold:
  % Denote by $f(x_{*})$ the limit of the sequence $(H(z_k))_{k \in \bN}$ where $x_{*}$ is a critical point of~$f$.
  %Then, the following convergence rates hold:
  \vskip -0.4in
  \begin{enumerate}[(i),noitemsep]
          \item If $\theta = 1$, then $f(x_k)$ converges in a finite number of iterations.
          \item If $1/2 \leq \theta < 1$, then $f(x_k)$ converges to $f(x_{*})$ linearly
          i.e. there exist $q \in (0,1), C>0$ \, s.t. $f(x_k)-f(x_{*}) \leq C\,q^k$\,.
          \item If $0 < \theta < 1/2$\,, then  $f(x_k)-f(x_{*}) = O(k^{\frac{1}{2\theta-1}})$\,.
    \end{enumerate}
\end{theorem}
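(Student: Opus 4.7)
The plan is to adapt the abstract convergence scheme of Attouch--Bolte--Svaiter to the momentum setting by replacing the objective $f$ by the separable Lyapunov function $H$ defined in~\eqref{def:H}. The first step is to pin down the limit. By Lemma~\ref{lemma:lyap} together with the choice of $\varepsilon$ in Condition~\eqref{hyp:stepsize_bound}, the sequence $(H_n)$ is nonincreasing and bounded below (since $f$ is bounded below and $\|y_n\|^2/(2b)\geq 0$), so $H_n\downarrow H_\star$ for some $H_\star\in\bR$. Lemma~\ref{lemma:subseq} then provides a nonempty compact set $\omega(z_0)\subset \mathrm{crit}\,H = \mathrm{crit}\,f\times\{0\}$ on which $H\equiv H_\star$, and such that $\mathsf{d}(z_n,\omega(z_0))\to 0$. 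Picking any $(x_\star,0)\in\omega(z_0)$ gives a critical point of $f$ with $f(x_\star)=H_\star$.

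Set $r_n\eqdef H_n-H_\star \geq 0$. If $r_{n_0}=0$ for some $n_0$, the descent inequality~\eqref{eq:descent} forces $p_n=0$ and $\nabla f(x_n)=0$ for all $n\geq n_0$, so $r_n=0$ thereafter. Otherwise $r_n$ decreases strictly to $0$, and for $n$ large enough $z_n$ lies in the tube on which Lemma~\ref{lemma:unif_KL} applies to $\omega(z_0)$, yielding $\varphi'(r_n)\|\nabla H(z_n)\|\geq 1$. From $\nabla H(x,y)=(\nabla f(x),y/b)$, the identity $\nabla f(x_n)=p_n+b^{-1}(p_{n+1}-p_n)$, and the uniform upper bound $a_n\leq a_{\sup}\mathbf{1}$ from~\eqref{hyp:stepsize_bound}, one obtains
\begin{equation*}
\|\nabla H(z_n)\|^2 \;\leq\; C_1\bigl(\|p_n\|^2+\|p_{n+1}\|^2+\|\nabla f(x_n)-p_n\|^2\bigr).
\end{equation*}
Meanwhile, Lemma~\ref{lemma:lyap} with $A_{n+1}\geq\varepsilon$, $B\geq 0$ and $a_n\geq \delta\mathbf{1}$ gives
\begin{equation*}
r_n-r_{n+1} \;\geq\; c_1\bigl(\|p_{n+1}\|^2+\|\nabla f(x_n)-p_n\|^2\bigr).
\end{equation*}
Since $\|p_n\|^2$ is controlled by $r_{n-1}-r_n$ rather than $r_n-r_{n+1}$, combining the two with shifted indices yields the key bridge
\begin{equation*}
\|\nabla H(z_n)\|^2 \;\leq\; C_2\bigl((r_{n-1}-r_n)+(r_n-r_{n+1})\bigr).
\end{equation*}

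Specializing the desingularizing function to $\varphi(s)=(\bar c/\theta)s^\theta$, the K\L{} inequality becomes $\|\nabla H(z_n)\|\geq \bar c^{-1}r_n^{1-\theta}$, which combined with the bridge above delivers the two-step recursion $r_n^{2(1-\theta)} \leq C_3\bigl((r_{n-1}-r_n)+(r_n-r_{n+1})\bigr)$. Setting $s_n\eqdef r_{n-1}+r_n$ recasts this as $s_n^{2(1-\theta)}\leq K(s_{n-1}-s_{n+1})$, a telescoping recurrence to which the classical Attouch--Bolte sequence lemma (as used in \citet[Theorem~2]{johnstone2017convergence}) applies. It produces geometric decay of $s_n$, hence of $r_n$, when $\theta\in[1/2,1)$ (case~(ii)), and the polynomial rate $r_n=O(k^{1/(2\theta-1)})$ when $\theta\in(0,1/2)$ (case~(iii)). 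Case~(i), $\theta=1$, falls out of the finite-termination branch above: when $\theta=1$, $\varphi'\equiv \bar c$, so the K\L{} inequality forces $\|\nabla H(z_n)\|\geq 1/\bar c$ for every $n$ in the tube with $r_n>0$, contradicting $\|\nabla H(z_n)\|\to 0$. Finally, $f(x_n)-f(x_\star)\leq H_n-H_\star=r_n$, so the rate on $r_n$ transfers directly to $f(x_n)-f(x_\star)$.

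The main obstacle is the bridge step: the Lyapunov function involves both $x_n$ and $y_n=\sqrt{a_n}\,p_n$, while the descent inequality only controls $\|p_{n+1}\|^2$ and $\|\nabla f(x_n)-p_n\|^2$, forcing a two-step telescoping to absorb the lagging $\|p_n\|^2$ contribution to $\|\nabla H(z_n)\|^2$. The adaptivity of $a_n$ must be handled in parallel, using Assumption~\ref{hyp:stepsize} to compare $a_n$ and $a_{n+1}$ and the two-sided bound~\eqref{hyp:stepsize_bound} to control $\|y_n\|$ in terms of $\|p_n\|$ uniformly in $n$. Once this bridge is in place, the K\L{} machinery produces the three rate regimes of the theorem essentially verbatim from the standard descent-method proof.
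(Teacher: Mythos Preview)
Your proposal is correct and follows essentially the same route as the paper: both establish a two-step bridge $\|\nabla H(z_n)\|^2\leq C\,(r_{n-2}-r_n)$ (the paper phrases it via $\|x_{k+1}-x_k\|$ through the three abstract conditions of Lemma~\ref{lemma:conditions}, you equivalently via $\|p_n\|$ using $x_{k+1}-x_k=-a_{k+1}p_{k+1}$ and the bounds in~\eqref{hyp:stepsize_bound}), feed it into the uniformized K\L{} inequality on $\omega(z_0)$, and then analyze the recursion $r_{k-2}-r_k\geq C_1\,r_k^{2(1-\theta)}$ in the three regimes. One small slip: your descent bound $r_n-r_{n+1}\geq c_1(\|p_{n+1}\|^2+\|\nabla f(x_n)-p_n\|^2)$ with a strictly positive coefficient on the second term would require $B>0$ in Lemma~\ref{lemma:lyap}, whereas the paper's choice of $u$ in the proof of Theorem~\ref{thm:deterministic} yields $B=0$; this is harmless, since $\nabla f(x_n)-p_n=b^{-1}(p_{n+1}-p_n)$ is already controlled by $\|p_n\|^2+\|p_{n+1}\|^2$ and the extra term can simply be dropped.
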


The exact same rates hold for gradient descent by supposing that $f$ (instead of $H$) is KL with exponent $\theta$. Assumption~\ref{hyp:stepsize} and condition~(\ref{hyp:stepsize_bound}) are not needed in this case.\\

\noindent\textbf{Sketch of the proof.} The proof consists of two main steps. The first one is to show that the iterates
enter and stay in a region where the K\L{} inequality holds. This is achieved using the properties of the limit set (\lemmaref{lemma:subseq}) and
the uniformized K\L{} property (\lemmaref{lemma:unif_KL}). Then, the second step is to exploit this inequality to derive the sought convergence results.
We defer the complete proof to \Cref{proof:KL_rates}.

We introduce a lemma in order to make the K\L{} assumption on the objective function $f$
instead of the function $H$.

\begin{lemma}
  \label{lemma:f_KL}
Let $f$ be a continuously differentiable function satisfying the KL property at $\bar x$ with an exponent
of $\theta \in (0,1/2]$. Then the function $H$ defined in \Cref{def:H} has also the K\L{} property at
$(\bar x, 0)$ with an exponent of~$\theta$\,.
\end{lemma}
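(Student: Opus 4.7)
The plan is to transfer the K\L{} inequality from $f$ at $\bar x$ to $H$ at $(\bar x, 0)$ by exploiting the additive structure $H(x,y) = f(x) + \|y\|^{2}/(2b)$. First, I would carry out the elementary computations $\nabla H(x,y) = (\nabla f(x), y/b)$, so $\|\nabla H(x,y)\|^{2} = \|\nabla f(x)\|^{2} + \|y\|^{2}/b^{2}$, together with $H(x,y) - H(\bar x, 0) = (f(x) - f(\bar x)) + \|y\|^{2}/(2b)$. Setting $u := f(x) - f(\bar x)$ and $v := \|y\|^{2}/(2b)$, the task reduces to exhibiting $c > 0$ and a neighborhood $V$ of $(\bar x, 0)$ such that
\[
(u + v)^{2(1-\theta)} \leq c^{2}\left(\|\nabla f(x)\|^{2} + \tfrac{2v}{b}\right)
\]
whenever $(x,y) \in V$ and $u + v > 0$; this is equivalent to the K\L{} inequality for $H$ with desingularizing function $\varphi(s) = (c/\theta)\, s^{\theta}$.

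By hypothesis, $f$ satisfies K\L{} at $\bar x$ with exponent $\theta$, which after squaring reads $u^{2(1-\theta)} \leq \bar c_{f}^{\,2}\,\|\nabla f(x)\|^{2}$ on some set of the form $U \cap [0 < u < \eta_{f}]$. I would then split the analysis according to the sign of $u$. If $u \leq 0$, then necessarily $v > 0$ and $(u+v)^{2(1-\theta)} \leq v^{2(1-\theta)}$; after shrinking the neighborhood so that $v \leq v_{0}$ for some small $v_{0}$, and using that $2(1-\theta) \geq 1$ (which is precisely where $\theta \leq 1/2$ enters), one obtains $v^{2(1-\theta)} \leq v_{0}^{1-2\theta}\, v$, which is absorbed by the term $2v/b = \|y\|^{2}/b^{2}$ inside $\|\nabla H\|^{2}$. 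If $u > 0$, the convex inequality $(u+v)^{p} \leq 2^{p-1}(u^{p} + v^{p})$ applied with $p = 2(1-\theta) \geq 1$ splits the left-hand side into a $u^{2(1-\theta)}$ piece, absorbed by the K\L{} of $f$, and a $v^{2(1-\theta)}$ piece, handled as in the previous case.

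Assembling the two cases produces a single constant $c$ for which the displayed inequality holds on a sufficiently small neighborhood of $(\bar x, 0)$ intersected with $\{H > H(\bar x, 0)\}$; taking square roots then yields the K\L{} property of $H$ at $(\bar x, 0)$ with exponent $\theta$, as desired. The main subtlety lies in the bound $v^{2(1-\theta)} \leq \mathrm{const}\cdot v$: for $\theta < 1/2$ the exponent $2(1-\theta)$ is strictly greater than one, so this inequality only holds once $y$ is restricted to a bounded neighborhood of the origin, and one must verify that the localization on $y$ is compatible with the localization on $x$ coming from the K\L{} of $f$ and with the sublevel set $[H < H(\bar z) + \eta]$. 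This dependence on localization is what forces the restriction $\theta \in (0, 1/2]$, since for larger exponents the quadratic term $\|y\|^{2}/(2b)$ would impose its own K\L{} exponent $1/2$ and prevent $H$ from inheriting a sharper exponent from $f$.
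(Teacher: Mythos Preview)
Your proposal is correct and follows essentially the same route as the paper: both compute $\nabla H(x,y)=(\nabla f(x),y/b)$, use the K\L{} inequality of $f$ to control the $x$-part, and exploit the localization $\|y\|$ small together with $2(1-\theta)\geq 1$ to absorb the quadratic $y$-part. The only stylistic difference is that the paper works on the gradient side (lower-bounding $\|\nabla H\|^{1/(1-\theta)}$ via concavity of $t\mapsto t^{1/(2(1-\theta))}$) rather than on the value side, which lets it avoid your case split on the sign of $u$, since the inequality $\|\nabla f(x)\|^{1/(1-\theta)}\geq c\,(f(x)-f(\bar x))$ holds trivially when the right-hand side is nonpositive.
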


The following result derives a convergence rate on the objective function values under
a K\L{} assumption on this same function instead of an assumption on the Lyapunov function~$H$.
The result is an immediate consequence of \lemmaref{lemma:f_KL} and \Cref{thm:rates}.

\begin{corollary}
  Let $(z_k)_{k \in \bN}$ be the sequence defined for all $k \in \bN$ by $z_k = (x_k,y_k)$
  where $y_k = \sqrt{a_k}p_k$ and $(x_k,p_k)$ is generated by Algorithm~(\ref{algo}) from a starting point $z_0$.
  Let \Cref{hyp:model,hyp:stepsize,hyp:coercivity} hold true. Assume that Condition~(\ref{hyp:stepsize_bound}) holds.
  Suppose moreover that $f$ is a K\L{} function with K\L{} exponent $\theta \in (0,1/2)$.
  Then, the sequence $(H(z_k))_{k \in \bN}$ converges to $f(x_{*})$ where $x_{*}$ is a critical point of~$f$ and
  %the following convergence rate holds:
  % Denote by $f(x_{*})$ the limit of the sequence $(H(z_k))_{k \in \bN}$ where $x_{*}$ is a critical point of~$f$.
  %Then
  $f(x_k)- f(x_{*}) =  O(k^{\frac{1}{2\theta-1}})$\,.
\end{corollary}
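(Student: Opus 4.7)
The plan is to chain together Lemma~\ref{lemma:f_KL} and case~(iii) of Theorem~\ref{thm:rates}, after observing that the K\L{} hypothesis on $H$ in Theorem~\ref{thm:rates} is really only needed at the limit points of $(z_k)$.

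First, I would invoke Lemma~\ref{lemma:subseq}: under Assumptions~\ref{hyp:model},~\ref{hyp:stepsize},~\ref{hyp:coercivity} and Condition~(\ref{hyp:stepsize_bound}), the limit set $\omega(z_0)$ is a nonempty compact subset of $\mathrm{crit}\,H = \mathrm{crit}\,f \times \{0\}$, and $H$ is constant on $\omega(z_0)$ (equal to the common value $f(x_*)$ for $x_* \in \mathrm{crit}\,f$ a limit point of $(x_k)$). In particular, every limit point of $(z_k)$ has the form $(\bar x,0)$ with $\bar x$ a critical point of $f$.

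Next, since $f$ is assumed to be a K\L{} function with exponent $\theta \in (0,1/2)$, it satisfies in particular the K\L{} inequality at each such $\bar x$ with exponent~$\theta$. Applying Lemma~\ref{lemma:f_KL} at every $\bar x \in \mathrm{crit}\,f$ then transfers this property to $H$: the function $H$ has the K\L{} property with exponent $\theta$ at each point of $\omega(z_0)$.

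Then I would inspect the proof of Theorem~\ref{thm:rates} (see \Cref{proof:KL_rates}) and note that the K\L{} assumption on $H$ enters the argument only through the uniformized K\L{} Lemma~\ref{lemma:unif_KL}, applied to the compact set $\Omega = \omega(z_0)$ on which $H$ is constant. Consequently, the conclusions of Theorem~\ref{thm:rates} remain valid as soon as $H$ enjoys the K\L{} property with exponent $\theta$ at each point of $\omega(z_0)$, which was just established; away from critical points the K\L{} inequality is automatic (Remark~3.2(b) of \citet{attouch2010proximal}, recalled in \Cref{sec:KLrates}).

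Finally, since $\theta \in (0,1/2)$, item~(iii) of Theorem~\ref{thm:rates} yields the announced rate $f(x_k) - f(x_*) = O(k^{1/(2\theta-1)})$, and the convergence $H(z_k) \to f(x_*)$ is a direct byproduct (note that $y_k = \sqrt{a_k}\,p_k \to 0$ by Theorem~\ref{thm:deterministic}, so $H(z_k) - f(x_k) = \tfrac{1}{2b}\|y_k\|^2 \to 0$). I do not anticipate any serious obstacle in the argument; the only subtlety is verifying that Theorem~\ref{thm:rates} really uses K\L{} only on the compact limit set $\omega(z_0)$ rather than globally on $\mathbb{R}^{2d}$, so that the pointwise transfer given by Lemma~\ref{lemma:f_KL} (which is stated only at points of the form $(\bar x,0)$) suffices.
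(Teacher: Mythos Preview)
Your proposal is correct and follows exactly the route the paper indicates: the paper's own proof is the single sentence ``The result is an immediate consequence of Lemma~\ref{lemma:f_KL} and Theorem~\ref{thm:rates}.'' You have in fact been more careful than the paper by making explicit that Lemma~\ref{lemma:f_KL} only yields the K\L{} property for $H$ at points of the form $(\bar x,0)$, and by checking that this suffices because the proof of Theorem~\ref{thm:rates} invokes the uniformized K\L{} lemma only on $\omega(z_0)\subset\mathrm{crit}\,f\times\{0\}$.
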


%\subsection{Simulations}
\subsection{Toy problem : K\L{} rates for $f(x) = x^p$.}
\label{sec:simus}

\begin{figure*}
  \begin{center}
  \centerline{
  \includegraphics[width=\textwidth]{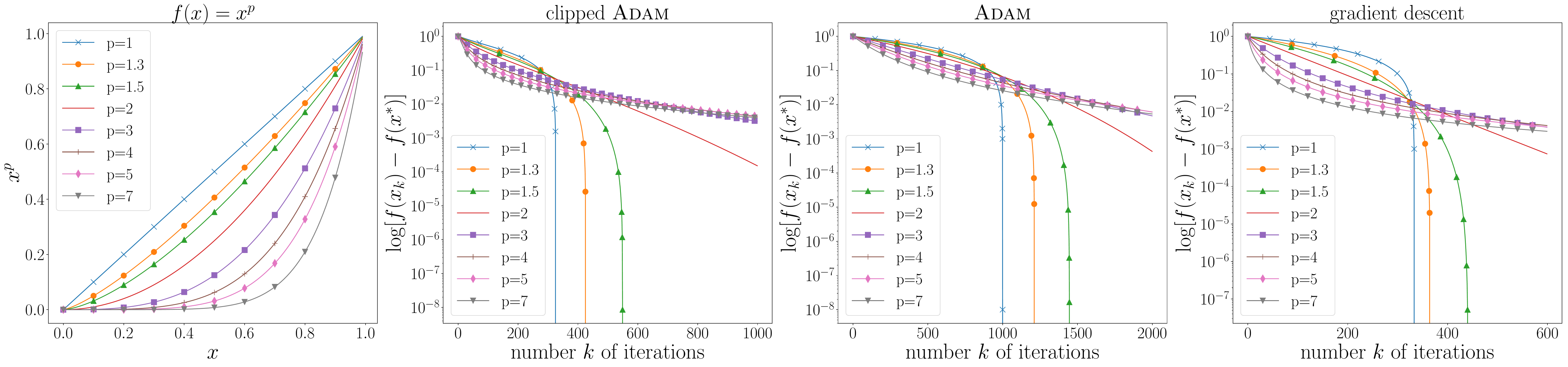}
  }
  \caption{Illustration of K\L{} rates for a simple objective function $f(x) = x^p$.
  From left to right : $(i)$ curves of $f(x) = x^p$, $(ii)$ clipped version of \adam\,(see Algorithm~(\ref{algo})),
  $(iii)$ \adam\,,$(iv)$ Gradient Descent. Best seen in color.}
  \label{fig:KL_rates}
  \end{center}
\vskip -0.4in
\end{figure*}

%In this section, we illustrate our results on simulated data.
%\noindent \textbf{Toy problem : K\L{} rates for $f(x) = x^p$.}
K\L{} rates are \textit{asymptotic} rates in the sense that the constants cannot be explicited in the convergence rates.
As a consequence, the rates can be hardly observable in practice from experiments. However, we can still illustrate these convergence results  (\Cref{thm:rates}) in a simple toy example to give more insight.
Consider the problem of
minimizing the function $f(x) = x^p$ for a real $p \in [1,7]$.
One can easily show that $f$ is a K\L{} function with K\L{} exponent $\theta =\frac{1}{p}$.
Note that the K\L{} exponent is difficult to compute in general.
This justifies the choice of this toy problem. Moreover, even if the function $f$ is indeed convex,
we recall the reader that the K\L{} property is a local geometric property of the function
that is only interesting at its critical points (since it is automatically verified at any non critical point).
Notice that the K\L{} analysis is valid in the general nonconvex case.
The present toy example remains relevant if we modify the objective function $f$ to be nonconvex and
still keep a $x^p$ shape in a neighborhood of the point zero which is the unique critical point
in this example.

The K\L{} exponent as shown in the first plot in \Cref{fig:KL_rates} encodes
information about the flatness of the function $f$. Indeed, as $p$ increases, the function $f$
gets flatter around the origin $x=0$.
We run the clipped version of \adam\ (see Algorithm~\ref{algo}), the \adam\ algorithm
and gradient descent on the functions $f$ corresponding to different values of the exponent $\theta$,
from the same initialization point $x=1$.
As expected from \Cref{thm:rates} for the clipped \adam\,, we observe in
\Cref{fig:KL_rates} that $f(x_k)$ converges linearly or even in a finite number
of iterations for $p \in \{ 1, 1.3, 1.5, 2\}$. Notice that the linear rate is clearly
observable for $p=2$ corresponding to $\theta = \frac{1}{2}$. Even if we did not
establish K\L{} rates for original \adam\,, \Cref{fig:KL_rates} shows that it presents a
very similar behavior to the clipped version of \adam\ in terms of K\L{} convergence rates
in this simple problem. We also represent gradient descent iterates for comparison.
Note that K\L{} rates are known to hold for gradient descent. Moreover, for $p>2$,
we also observe a slower rate corresponding to the sublinear rate of the function values.

\section{Conclusion}
\label{sec:conclusion}

In this paper, we provided convergence rates for a clipped version of \adam\ which
stems from a boundedness assumption on the effective stepsize of the original \adam\,. More precisely,
similarly to gradient descent, we established a $O(1/n)$ convergence rate of
the minimum of the squared gradient norms in the deterministic case.
Furthermore, we showed a similar convergence result in the stochastic setting up to
the variance of the noisy gradients. Finally, we established function value convergence rates
under the same boundedness assumption on the effective stepsizes together with the K\L{} geometric property.
This property is a powerful tool allowing to address nonconvex nonsmooth optimization and covers most deep neural networks.
% One of the appealing features of this K\L{} analysis is that it covers most deep neural networks.

\acks{
We thank the anonymous reviewers for their helpful comments.
A.B. was supported by the 'Futur \& Ruptures' research program which is jointly funded by the IMT, the Mines-Télécom Foundation and the Carnot TSN Institute.
}

\bibliography{acml20}

\begin{thebibliography}{52}
\providecommand{\natexlab}[1]{#1}
\providecommand{\url}[1]{\texttt{#1}}
\expandafter\ifx\csname urlstyle\endcsname\relax
  \providecommand{\doi}[1]{doi: #1}\else
  \providecommand{\doi}{doi: \begingroup \urlstyle{rm}\Url}\fi

\bibitem[Absil et~al.(2005)Absil, Mahony, and Andrews]{absil2005convergence}
P-A. Absil, R.~Mahony, and B.~Andrews.
\newblock Convergence of the iterates of descent methods for analytic cost
  functions.
\newblock \emph{SIAM Journal on Optimization}, 16\penalty0 (2):\penalty0
  531--547, 2005.

\bibitem[Agarwal et~al.(2019)Agarwal, Bullins, Chen, Hazan, Singh, Zhang, and
  Zhang]{pmlr-v97-agarwal19b}
N.~Agarwal, B.~Bullins, X.~Chen, E.~Hazan, K.~Singh, C.~Zhang, and Y.~Zhang.
\newblock Efficient full-matrix adaptive regularization.
\newblock In \emph{Proceedings of the 36th International Conference on Machine
  Learning}, volume~97, pages 102--110, Long Beach, California, USA, 09--15 Jun
  2019. PMLR.
\newblock URL \url{http://proceedings.mlr.press/v97/agarwal19b.html}.

\bibitem[Attouch and Bolte(2009)]{attouch2009convergence}
H.~Attouch and J.~Bolte.
\newblock On the convergence of the proximal algorithm for nonsmooth functions
  involving analytic features.
\newblock \emph{Mathematical Programming}, 116\penalty0 (1-2):\penalty0 5--16,
  2009.

\bibitem[Attouch et~al.(2010)Attouch, Bolte, Redont, and
  Soubeyran]{attouch2010proximal}
H.~Attouch, J.~Bolte, P.~Redont, and A.~Soubeyran.
\newblock Proximal alternating minimization and projection methods for
  nonconvex problems: An approach based on the kurdyka-{\l}ojasiewicz
  inequality.
\newblock \emph{Mathematics of Operations Research}, 35\penalty0 (2):\penalty0
  438--457, 2010.

\bibitem[Bolte et~al.(2010)Bolte, Daniilidis, Ley, and
  Mazet]{bolte2010characterizations}
J.~Bolte, A.~Daniilidis, O.~Ley, and L.~Mazet.
\newblock Characterizations of {\l}ojasiewicz inequalities: subgradient flows,
  talweg, convexity.
\newblock \emph{Transactions of the American Mathematical Society},
  362\penalty0 (6):\penalty0 3319--3363, 2010.

\bibitem[Bolte et~al.(2014)Bolte, Sabach, and Teboulle]{bolte2014proximal}
J.~Bolte, S.~Sabach, and M.~Teboulle.
\newblock Proximal alternating linearized minimization for nonconvex and
  nonsmooth problems.
\newblock \emph{Mathematical Programming}, 146\penalty0 (1-2):\penalty0
  459--494, 2014.

\bibitem[Bolte et~al.(2018)Bolte, Sabach, Teboulle, and
  Vaisbourd]{bolte2018first}
J.~Bolte, S.~Sabach, M.~Teboulle, and Y.~Vaisbourd.
\newblock First order methods beyond convexity and lipschitz gradient
  continuity with applications to quadratic inverse problems.
\newblock \emph{SIAM Journal on Optimization}, 28\penalty0 (3):\penalty0
  2131--2151, 2018.

\bibitem[Bottou et~al.(2018)Bottou, Curtis, and
  Nocedal]{bottou2018optimization}
L.~Bottou, F.~Curtis, and J.~Nocedal.
\newblock Optimization methods for large-scale machine learning.
\newblock \emph{Siam Review}, 60\penalty0 (2):\penalty0 223--311, 2018.

\bibitem[Castera et~al.(2019)Castera, Bolte, F{\'e}votte, and
  Pauwels]{castera2019inertial}
C.~Castera, J.~Bolte, C.~F{\'e}votte, and E.~Pauwels.
\newblock An inertial newton algorithm for deep learning.
\newblock \emph{arXiv preprint arXiv:1905.12278}, 2019.

\bibitem[Chen et~al.(2018)Chen, Zhou, Tang, Yang, and Gu]{chen2018closing}
J.~Chen, D.~Zhou, Y.~Tang, Z.~Yang, and Q.~Gu.
\newblock Closing the generalization gap of adaptive gradient methods in
  training deep neural networks.
\newblock \emph{arXiv preprint arXiv:1806.06763}, 2018.

\bibitem[Chen et~al.(2019)Chen, Liu, Sun, and Hong]{chen2018convergence}
X.~Chen, S.~Liu, R.~Sun, and M.~Hong.
\newblock On the convergence of a class of adam-type algorithms for non-convex
  optimization.
\newblock In \emph{International Conference on Learning Representations}, 2019.
\newblock URL \url{https://openreview.net/forum?id=H1x-x309tm}.

\bibitem[Davis et~al.(2019)Davis, Drusvyatskiy, Kakade, and
  Lee]{davis2018stochastic}
D.~Davis, D.~Drusvyatskiy, S.~Kakade, and J.D. Lee.
\newblock Stochastic subgradient method converges on tame functions.
\newblock \emph{Foundations of Computational Mathematics}, pages 1--36, 2019.

\bibitem[De et~al.(2018)De, Mukherjee, and Ullah]{basu2018convergence}
S.~De, A.~Mukherjee, and E.~Ullah.
\newblock Convergence guarantees for rmsprop and adam in non-convex
  optimization and their comparison to nesterov acceleration on autoencoders.
\newblock \emph{arXiv preprint arXiv:1807.06766}, 2018.

\bibitem[Diakonikolas and Jordan(2019)]{diakonikolas2019generalized}
J.~Diakonikolas and M.~I. Jordan.
\newblock Generalized momentum-based methods: A hamiltonian perspective.
\newblock \emph{arXiv preprint arXiv:1906.00436}, 2019.

\bibitem[Dozat(2016)]{dozat2016incorporating}
T.~Dozat.
\newblock Incorporating nesterov momentum into adam.
\newblock 2016.

\bibitem[Duchi et~al.(2011)Duchi, Hazan, and Singer]{duchi2011adaptive}
J.~Duchi, E.~Hazan, and Y.~Singer.
\newblock Adaptive subgradient methods for online learning and stochastic
  optimization.
\newblock \emph{Journal of Machine Learning Research}, 12\penalty0
  (Jul):\penalty0 2121--2159, 2011.

\bibitem[Ghadimi and Lan(2013)]{ghadimi2013stochastic}
S.~Ghadimi and G.~Lan.
\newblock Stochastic first-and zeroth-order methods for nonconvex stochastic
  programming.
\newblock \emph{SIAM Journal on Optimization}, 23\penalty0 (4):\penalty0
  2341--2368, 2013.

\bibitem[Gupta et~al.(2017)Gupta, Koren, and Singer]{gupta2017unified}
V.~Gupta, T.~Koren, and Y.~Singer.
\newblock A unified approach to adaptive regularization in online and
  stochastic optimization.
\newblock \emph{arXiv preprint arXiv:1706.06569}, 2017.

\bibitem[Johnstone and Moulin(2017)]{johnstone2017convergence}
P.~R. Johnstone and P.~Moulin.
\newblock Convergence rates of inertial splitting schemes for nonconvex
  composite optimization.
\newblock In \emph{2017 IEEE International Conference on Acoustics, Speech and
  Signal Processing (ICASSP)}, pages 4716--4720. IEEE, 2017.

\bibitem[Karimi et~al.(2016)Karimi, Nutini, and Schmidt]{karimi2016linear}
Hamed Karimi, Julie Nutini, and Mark Schmidt.
\newblock Linear convergence of gradient and proximal-gradient methods under
  the polyak-{\l}ojasiewicz condition.
\newblock In \emph{Joint European Conference on Machine Learning and Knowledge
  Discovery in Databases}, pages 795--811. Springer, 2016.

\bibitem[Kingma and Ba(2015)]{kingma2014adam}
D.~P. Kingma and J.~Ba.
\newblock Adam: A method for stochastic optimization.
\newblock In \emph{International Conference on Learning Representations}, 2015.

\bibitem[Kurdyka(1998)]{kurdyka1998gradients}
K.~Kurdyka.
\newblock On gradients of functions definable in o-minimal structures.
\newblock In \emph{Annales de l'institut Fourier}, volume~48, pages 769--783,
  1998.

\bibitem[Li and Pong(2018)]{li2018calculus}
G.~Li and T.~K. Pong.
\newblock Calculus of the exponent of kurdyka--{\l}ojasiewicz inequality and
  its applications to linear convergence of first-order methods.
\newblock \emph{Foundations of computational mathematics}, 18\penalty0
  (5):\penalty0 1199--1232, 2018.

\bibitem[Li et~al.(2017)Li, Zhou, Liang, and Varshney]{li2017convergence}
Q.~Li, Y.~Zhou, Y.~Liang, and P.~K. Varshney.
\newblock Convergence analysis of proximal gradient with momentum for nonconvex
  optimization.
\newblock In \emph{Proceedings of the 34th International Conference on Machine
  Learning-Volume 70}, pages 2111--2119. JMLR. org, 2017.

\bibitem[Li and Orabona(2019)]{liorabona19adagrad}
X.~Li and F.~Orabona.
\newblock On the convergence of stochastic gradient descent with adaptive
  stepsizes.
\newblock In \emph{Proceedings of Machine Learning Research}, volume~89, pages
  983--992. PMLR, 16--18 Apr 2019.
\newblock URL \url{http://proceedings.mlr.press/v89/li19c.html}.

\bibitem[Liang et~al.(2016)Liang, Fadili, and Peyr{\'e}]{liang2016multi}
J.~Liang, J.~Fadili, and G.~Peyr{\'e}.
\newblock A multi-step inertial forward-backward splitting method for
  non-convex optimization.
\newblock In \emph{Advances in Neural Information Processing Systems}, pages
  4035--4043, 2016.

\bibitem[Liu et~al.(2019)Liu, Jiang, He, Chen, Liu, Gao, and
  Han]{liu2019variance}
L.~Liu, H.~Jiang, P.~He, W.~Chen, X.~Liu, J.~Gao, and J.~Han.
\newblock On the variance of the adaptive learning rate and beyond.
\newblock \emph{arXiv preprint arXiv:1908.03265}, 2019.

\bibitem[\L{}ojasiewicz(1963)]{lojasiewicz1963propriete}
S.~\L{}ojasiewicz.
\newblock Une propri{\'e}t{\'e} topologique des sous-ensembles analytiques
  r{\'e}els.
\newblock \emph{Les {\'e}quations aux d{\'e}riv{\'e}es partielles},
  117:\penalty0 87--89, 1963.

\bibitem[Luo et~al.(2019)Luo, Xiong, and Liu]{luo2018adaptive}
L.~Luo, Y.~Xiong, and Y.~Liu.
\newblock Adaptive gradient methods with dynamic bound of learning rate.
\newblock In \emph{International Conference on Learning Representations}, 2019.
\newblock URL \url{https://openreview.net/forum?id=Bkg3g2R9FX}.

\bibitem[Ma and Yarats(2019)]{ma2018quasihyperbolic}
J.~Ma and D.~Yarats.
\newblock Quasi-hyperbolic momentum and adam for deep learning.
\newblock In \emph{International Conference on Learning Representations}, 2019.
\newblock URL \url{https://openreview.net/forum?id=S1fUpoR5FQ}.

\bibitem[McMahan and Streeter(2010)]{McMahan2010AdaptiveBO}
H.~B. McMahan and M.~J. Streeter.
\newblock Adaptive bound optimization for online convex optimization.
\newblock In \emph{COLT}, pages 244--256, 2010.

\bibitem[Nesterov(2004)]{nesterov2004book}
Y.~Nesterov.
\newblock \emph{Introductory lectures on convex optimization: a basic course}.
\newblock Springer: New York, NY, USA, 2004.

\bibitem[Ochs(2018)]{ochs2018local}
P.~Ochs.
\newblock Local convergence of the heavy-ball method and ipiano for non-convex
  optimization.
\newblock \emph{Journal of Optimization Theory and Applications}, 177\penalty0
  (1):\penalty0 153--180, 2018.

\bibitem[Ochs et~al.(2014)Ochs, Chen, Brox, and Pock]{ipiano}
P.~Ochs, Y.~Chen, T.~Brox, and T.~Pock.
\newblock ipiano: Inertial proximal algorithm for nonconvex optimization.
\newblock \emph{SIAM Journal on Imaging Sciences}, 7\penalty0 (2):\penalty0
  1388--1419, 2014.
\newblock \doi{10.1137/130942954}.
\newblock URL \url{https://doi.org/10.1137/130942954}.

\bibitem[Pascanu et~al.(2013)Pascanu, Mikolov, and
  Bengio]{pascanu2013difficulty}
R.~Pascanu, T.~Mikolov, and Y.~Bengio.
\newblock On the difficulty of training recurrent neural networks.
\newblock In \emph{International conference on machine learning}, pages
  1310--1318, 2013.

\bibitem[Polyak(1964)]{polyak1964some}
B.~T. Polyak.
\newblock Some methods of speeding up the convergence of iteration methods.
\newblock \emph{USSR Computational Mathematics and Mathematical Physics},
  4\penalty0 (5):\penalty0 1--17, 1964.

\bibitem[Reddi et~al.(2018)Reddi, Kale, and Kumar]{j.2018on}
S.~J. Reddi, S.~Kale, and S.~Kumar.
\newblock On the convergence of adam and beyond.
\newblock In \emph{International Conference on Learning Representations}, 2018.
\newblock URL \url{https://openreview.net/forum?id=ryQu7f-RZ}.

\bibitem[Robbins and Monro(1951)]{robbins1951stochastic}
H.~Robbins and S.~Monro.
\newblock A stochastic approximation method.
\newblock \emph{The annals of mathematical statistics}, pages 400--407, 1951.

\bibitem[Savarese(2019)]{savarese2019adaboundissue}
P.~Savarese.
\newblock On the convergence of adabound and its connection to sgd.
\newblock \emph{arXiv preprint arXiv:1908.04457}, 2019.

\bibitem[Staib et~al.(2019)Staib, Reddi, Kale, Kumar, and
  Sra]{staib19escapingsaddlesadaptive}
M.~Staib, S.~Reddi, S.~Kale, S.~Kumar, and S.~Sra.
\newblock Escaping saddle points with adaptive gradient methods.
\newblock In \emph{Proceedings of the 36th International Conference on Machine
  Learning}, volume~97, pages 5956--5965, Long Beach, California, USA, 09--15
  Jun 2019. PMLR.
\newblock URL \url{http://proceedings.mlr.press/v97/staib19a.html}.

\bibitem[Sutskever et~al.(2013)Sutskever, Martens, Dahl, and
  Hinton]{sutskever2013importance}
I.~Sutskever, J.~Martens, G.~Dahl, and G.~Hinton.
\newblock On the importance of initialization and momentum in deep learning.
\newblock In \emph{International conference on machine learning}, pages
  1139--1147, 2013.

\bibitem[Tieleman and Hinton(2012)]{tieleman2012lecture}
T.~Tieleman and G.~Hinton.
\newblock Lecture 6.5-rmsprop: Divide the gradient by a running average of its
  recent magnitude.
\newblock \emph{Coursera: Neural networks for machine learning}, 4\penalty0
  (2):\penalty0 26--31, 2012.

\bibitem[Ward et~al.(2019)Ward, Wu, and Bottou]{ward2019adagrad}
R.~Ward, X.~Wu, and L.~Bottou.
\newblock Adagrad stepsizes: Sharp convergence over nonconvex landscapes.
\newblock In \emph{International Conference on Machine Learning}, pages
  6677--6686, 2019.

\bibitem[Wu et~al.(2018)Wu, Ward, and Bottou]{wu2018wngrad}
X.~Wu, R.~Ward, and L.~Bottou.
\newblock Wngrad: Learn the learning rate in gradient descent.
\newblock \emph{arXiv preprint arXiv:1803.02865}, 2018.

\bibitem[Wu and Li(2019)]{wu2019general}
Z.~Wu and M.~Li.
\newblock General inertial proximal gradient method for a class of nonconvex
  nonsmooth optimization problems.
\newblock \emph{Computational Optimization and Applications}, 73\penalty0
  (1):\penalty0 129--158, 2019.

\bibitem[Xie et~al.(2019)Xie, Wu, and Ward]{xie2019linear}
Y.~Xie, X.~Wu, and R.~Ward.
\newblock Linear convergence of adaptive stochastic gradient descent.
\newblock \emph{arXiv preprint arXiv:1908.10525}, 2019.

\bibitem[Zaheer et~al.(2018)Zaheer, Reddi, Sachan, Kale, and
  Kumar]{zaheer2018adaptive}
M.~Zaheer, S.~Reddi, D.~Sachan, S.~Kale, and S.~Kumar.
\newblock Adaptive methods for nonconvex optimization.
\newblock In \emph{Advances in Neural Information Processing Systems}, pages
  9793--9803, 2018.

\bibitem[Zeng et~al.(2019)Zeng, Lau, Lin, and Yao]{zeng19aglobalconvergenceBCD}
J.~Zeng, T.~T. Lau, S.~Lin, and Y.~Yao.
\newblock Global convergence of block coordinate descent in deep learning.
\newblock In \emph{Proceedings of the 36th International Conference on Machine
  Learning}, volume~97, pages 7313--7323, Long Beach, California, USA, 09--15
  Jun 2019. PMLR.
\newblock URL \url{http://proceedings.mlr.press/v97/zeng19a.html}.

\bibitem[Zhang et~al.(2019)Zhang, He, Sra, and
  Jadbabaie]{zhang2019gradientclipping}
J.~Zhang, T.~He, S.~Sra, and A.~Jadbabaie.
\newblock Why gradient clipping accelerates training: A theoretical
  justification for adaptivity.
\newblock In \emph{International Conference on Learning Representations}, 2019.

\bibitem[Zhou et~al.(2018)Zhou, Tang, Yang, Cao, and Gu]{zhou2018convergence}
D.~Zhou, Y.~Tang, Z.~Yang, Y.~Cao, and Q.~Gu.
\newblock On the convergence of adaptive gradient methods for nonconvex
  optimization.
\newblock \emph{arXiv preprint arXiv:1808.05671}, 2018.

\bibitem[Zhou et~al.(2019)Zhou, Zhang, Lu, Wang, Zhang, and
  Yu]{zhou2018adashift}
Z.~Zhou, Q.~Zhang, G.~Lu, H.~Wang, W.~Zhang, and Y.~Yu.
\newblock Adashift: Decorrelation and convergence of adaptive learning rate
  methods.
\newblock In \emph{International Conference on Learning Representations}, 2019.
\newblock URL \url{https://openreview.net/forum?id=HkgTkhRcKQ}.

\bibitem[Zou et~al.(2019)Zou, Shen, Jie, Zhang, and Liu]{zou2019sufficient}
F.~Zou, L.~Shen, Z.~Jie, W.~Zhang, and W.~Liu.
\newblock A sufficient condition for convergences of adam and rmsprop.
\newblock In \emph{Proceedings of the IEEE Conference on Computer Vision and
  Pattern Recognition}, pages 11127--11135, 2019.

\end{thebibliography}

\newpage
\pagenumbering{arabic}

\appendix
\section{First Order Convergence Rate}

\subsection{Variants of \adam\,}
\label{variants_adam}
We list most of the existing variants of the \adam\ algorithm together with their theoretical convergence guarantees in \Cref{tab:variants_adam}.

\label{tab:theoretical_guarantees}
\begin{table}[htbp]
%\centering
%\rotatebox{90}{%\begin{varwidth}{1.3\textheight}\centering
%\parbox{20cm}{
\caption{\textbf{Theoretical guarantees of variants of \adam\,.}
The gradient is supposed $L$-lipschitz continuous in all the convergence results. $g_{1:T,i} = [g_{1,i},g_{2,i},\cdots,g_{T,i}]^T$.
\label{tab:variants_adam}
}
 %}

%\begin{landscape}
\begin{adjustbox}{max width=1.33\textwidth,,angle=90}
\begin{tabular}{|l|c|c|c|c|c|c|l|c|}
  \hline
  \textbf{Algorithm} & \textbf{Effective step size} $a_{n+1}$ &  $b_n$ & $c_n$ & Assumptions & Convergence Result\\
  \hline
  \makecell{\textsc{AmsGrad}\tiny{$^{(1)}$}, \textsc{AdamNC}\tiny{$^{(2)}$}\\ \citep{j.2018on}} & \makecell{$\frac{a_0}{\sqrt{n}}\frac{1}{\sqrt{\hat v_n}}$\\ \tiny{$^{(1)}$}$\hat v_{n+1} = \max (\hat v_n,(1-c_n)v_n + c_n g_n^2)$\\ \tiny{$^{(2)}$}$\hat v_{n+1} = (1-c_n)\hat v_n + c_n g_n^2$} &\makecell{$1-b_1 \lambda^{n-1}\,$ \\or $1 - \frac{b_1}{n}$}& \makecell{$c_n \equiv c_1$\\$\frac{c_1}{n}$ \tiny{(for \textsc{AdamNC})}} & \makecell[l]{\tabitem convex functions \\ \tabitem bounded gradients\\ \tabitem bounded feasible set\\ \tabitem $\sum_{i=1}^d \hat v_{T,i}^{1/2} \leq d$ \tiny{(\textsc{AmsGrad})}\\ \tabitem $\sum_{i=1}^d \|g_{1:T,i}\|_2 \leq \sqrt{dT}$\\ \tabitem $b_1 < \sqrt{c_1}$\scriptsize{(\textsc{AdamNC})} } & \makecell{$R_T/T = O(\sqrt{log T/T})$\\ $\frac{R_T}{T} = O(1/\sqrt{T})$ \tiny{(\textsc{AdamNC})}}\\
  \hline
  \makecell{\textsc{Adam}\\ \citep{basu2018convergence}} & \makecell{$\frac{4\|g_n\|^2\eta}{3L(1-(1-b)^n)^2(\eta+2\sigma)^2}\frac{1}{\epsilon+\sqrt{v_n}}$\\ $v_{n+1} = (1-c_1)v_n + c_1 g_n^2$ } & \makecell{$b_n \equiv b_1$\\ $= 1 - \frac{\eta}{\eta + 2 \sigma}$} & $c_n \equiv c_1$ & \makecell[l]{\tabitem $\sigma$-bounded gradients\\ \tabitem $\epsilon = 2\sigma$} & \makecell{$\forall \eta >0\, \exists n \leq \frac{9L\sigma^2(f(x_2)-f(x_*))}{\eta^6}$ \\s.t. $\|g_n\| \leq \eta$}\\
  \hline
  \makecell{\textsc{Padam},\textsc{AmsGrad}\\ \citep{zhou2018convergence}}& \makecell{$\frac{1}{\sqrt{N}} \frac{1}{\hat v_n^p}$\\ $\frac{1}{\sqrt{dN}} \frac{1}{\hat v_n^{\frac{1}{2}}}$\tiny{(\textsc{AmsGrad})}\\ $\hat v_n = \max(\hat v_{n-1}, (1-c)v_{n-1}+c g_n^2)$} & $b_n \equiv b$ & $c_n \equiv c$ & \makecell[l]{\tabitem bounded gradients \\ For \textsc{Padam:} \tabitem $p \in [0,\frac{1}{4}]$ \\ \tabitem $1-b < (1-c)^{2p}$\\ \tabitem $\sum_{i=1}^d \|g_{1:N,i}\|_2 \leq \sqrt{d N}$\\ \tiny{\textsc{AmsGrad}:} $p=\frac{1}{2}$ and $1-b < 1-c$ }& \makecell{$\bE [ \|g_\tau\|^2] = O(\frac{1+\sqrt{d}}{\sqrt{N}}+ \frac{d}{N})$\\ $=O( \sqrt{\frac{d}{N}}+ \frac{d}{N})$ \tiny{(\textsc{AmsGrad})} \\ $\tau$ uniform r.v in $\{1,\cdots,N\}$ } \\
  \hline
  \makecell{\textsc{RmsProp}\tiny{$^{(1)}$}, \textsc{Yogi}\tiny{$^{(2)}$}\\ \citep{zaheer2018adaptive}} & \makecell{$\frac{a_1}{\epsilon + \sqrt{v_n}}$\\ \tiny{$^{(1)}$}$v_{n+1}=(1-c)v_n+c g_n^2$\\ \tiny{$^{(2)}$}$v_n = v_{n-1} - c \text{sign}(v_{n-1}-g_n^2)$} & $b_n \equiv b$ & $c_n \equiv c$ & \makecell[l]{\tabitem $G$-bounded gradients \\ \tabitem $a_1 \leq \frac{\epsilon\sqrt{1-c}}{2L}$ \scriptsize{(\textsc{Yogi})} \\ \tabitem $a_1 \leq \frac{\epsilon}{2L}$ \tabitem $c \leq \frac{\epsilon^2}{16 G^2}$\\ \tabitem $\sigma^2$-bounded variance}& \makecell{$\bE [ \|g_\tau\|^2] = O(\frac{1}{N}+\sigma^2)$\\ $\tau$ uniform r.v in $\{1,\cdots,N\}$\\ $O(\frac{1}{N})$ if minibatch $\Theta(N)$ } \\
  \hline
  \makecell{\textsc{AmsGrad}\tiny{$^{(1)}$}, \textsc{AdaFom}\tiny{$^{(2)}$}\\ \citep{chen2018convergence}} & \makecell{$\frac{1}{\sqrt{n}}\frac{1}{\sqrt{\hat v_n}}$\\ \tiny{$^{(1)}$} $\hat v_{n+1} = \max (\hat v_n,(1-c_n)v_n + c_n g_n^2)$ \\ \tiny{$^{(2)}$} $\hat v_{n+1} = (1-\frac{1}{n}) \hat v_n + \frac{1}{n} g_n^2)$} & non-increasing & $c_n \equiv c_1$ & \makecell[l]{\tabitem bounded gradients\\ \tabitem $\exists c >0$ s.t. $|g_{1,i}| \geq c$}  & $\underset{n \in [0,N]}{\min} \bE [ \|g_n\|^2]= O(\frac{log N + d^2}{\sqrt{N}})$\\
  \hline
  \makecell{\textsc{Generic Adam}\\ \citep{zou2019sufficient}} & \makecell{$\frac{\alpha_n}{\sqrt{v_n}}$\\ $v_{n+1} = (1-c_n)v_n + c_n g_n^2$\\ $\alpha_n = \hat\alpha \frac{\sqrt{1-(1-c)^n}}{1-(1-b)^n}$} & $b_n \geq b > 0$ & \makecell{$0<c_n<1$\\ \scriptsize{non-increasing}\\ \scriptsize{$\lim c_n = c > b^2$}}& \makecell[l]{\tabitem bounded gradients\\ in expectation\\ \tabitem $d_n \leq \frac{\alpha_n}{\sqrt{c_n}} \leq c_0 d_n$\\ $d_n$ non-increasing }& \makecell{$\bE [ \|g_\tau\|^{\frac{4}{3}}]^{\frac{3}{2}} \leq \frac{C + C'\sum_{n=1}^N\alpha_n \sqrt{c_n}}{N\alpha_N}$\\ $\tau$ uniform r.v in $\{1,\cdots,N\}$ }\\
  \hline
  \makecell{\textsc{AdaBound}\tiny{$^{(1)}$}, \textsc{AMSBound}\tiny{$^{(2)}$}\\ \citep{luo2018adaptive}} &\makecell{$\frac{1}{\sqrt{n}}\text{clip}(\frac{\alpha}{\sqrt{v_n}},\eta_l(n),\eta_u(n))$\\$\eta_l(n)$ non-decreasing to $\alpha_*$\\$\eta_u(n)$ non-increasing to $\alpha_*$\\ \tiny{$^{(1)}$}$v_{n+1} = (1-c)v_n + c g_n^2$ \\ \tiny{$^{(2)}$}$v_{n+1} = \max (v_n,(1-c)v_n + c g_n^2)$} & \makecell{ \scriptsize{$1-(1-b)\lambda^{n-1}$} \\or $1- \frac{1-b}{n}$ \\ $b_n\geq b$} & $c_n \equiv c$ & \makecell[l]{\tabitem bounded gradients \\ \tabitem closed convex\\ bounded feasible set\\ \tabitem $1-b < \sqrt{1-c}$} & $R_T/T = O(1/\sqrt{T})$ \\
  \hline
\end{tabular}
 %\end{varwidth}
 %}
%\end{landscape}
\end{adjustbox}
\end{table}

\begin{remark}
The average regret bound result in the last line of \Cref{tab:variants_adam}
figures in \cite{luo2018adaptive}. Actually, according to \citet{savarese2019adaboundissue},
slightly different assumptions on the bound functions should be considered to guarantee this regret rate.
\end{remark}

\subsection{Proof of \lemmaref{lemma:lyap}}
\label{proof:lemma_descent}

  Supposing that $\nabla f$ is $L-$Lipschitz, using Taylor's expansion and the expression of $p_n$ in the algorithm, we obtain the following inequality:
  \begin{equation}
  f(x_{n+1}) \leq f(x_n) - \ps{\nabla f(x_n),a_{n+1} p_{n+1}} + \frac{L}{2} \|a_{n+1} p_{n+1}\|^2
  \label{eq:fdiff}
  \end{equation}

Moreover,
\begin{equation}
  \frac{1}{2b} \ps{a_{n+1},p_{n+1}^2} - \frac{1}{2b} \ps{a_n,p_n^2} = \frac{1}{2b} \ps{a_{n+1},p_{n+1}^2-p_n^2} + \frac{1}{2b} \ps{a_{n+1}-a_n,p_n^2}.
\end{equation}

Observing that $p_{n+1}^2 - p_n^2 = - b^2(\nabla f(x_n)-p_n)^2 +2b p_{n+1} (\nabla f(x_n)-p_n)$, we obtain after simplification :
\begin{equation}
  H_{n+1} \leq H_n + \frac{L}{2} \|a_{n+1} p_{n+1}\|^2 - \frac{b}{2} \ps{a_{n+1}, (\nabla f(x_n)-p_n)^2} - \ps{a_{n+1}p_{n+1},p_n}+ \frac{1}{2b}\ps{a_{n+1}-a_n,p_n^2}.
\end{equation}

Using again $p_n = p_{n+1} - b(\nabla f(x_n)-p_n)$, we replace $p_n$ :

\begin{align*}
  H_{n+1}    &\leq H_n + \frac{L}{2} \|a_{n+1} p_{n+1}\|^2- \frac{b}{2} \ps{a_{n+1},(\nabla f(x_n)-p_n)^2}\\
             & - \ps{a_{n+1},p_{n+1}^2}  +b \ps{a_{n+1}p_{n+1},\nabla f(x_n)-p_n}+ \frac{1}{2b}\ps{a_{n+1}-a_n,p_n^2}.
\end{align*}

Under \Cref{hyp:stepsize}, we write: $\ps{a_{n+1}-a_n,p_n^2} \leq (1-\alpha)\ps{a_{n+1},p_n^2}$ and using $p_n^2 = p_{n+1}^2 + b^2(\nabla f(x_n)-p_n)^2 -2b p_{n+1} (\nabla f(x_n)-p_n)$, it holds that:

\begin{align*}
  \label{eq:Hn1}
H_{n+1} &\leq H_n - \ps{a_{n+1},p_{n+1}^2} - \frac{b}{2} \ps{a_{n+1},(\nabla f(x_n)-p_n)^2}\\
        &+ \frac{L}{2} \|a_{n+1} p_{n+1}\|^2 +(b-(1-\alpha)) \ps{a_{n+1}p_{n+1},\nabla f(x_n)-p_n}\\
        &+ \frac{1-\alpha}{2b} \ps{a_{n+1},p_{n+1}^2} +\frac{b(1-\alpha)}{2} \ps{a_{n+1},(\nabla f(x_n)-p_n)^2}.
\end{align*}

Using the classical inequality $xy \leq \frac{x^2}{2u} + \frac{uy^2}{2}$, we have :
\begin{equation}
  \label{eq:young1}
  (b-(1-\alpha)) a_{n+1}p_{n+1}(\nabla f(x_n)-p_n) \leq \frac{|b-(1-\alpha)|}{2u}\ps{a_{n+1},p_{n+1}^2} + \frac{|b-(1-\alpha)|u}{2}\ps{a_{n+1},(\nabla f(x_n)-p_n)^2}.
\end{equation}

Hence, after using this inequality and rearranging the terms, we derive the following inequality:

\begin{align*}
H_{n+1} &\leq H_n - \ps{a_{n+1}p_{n+1}^2, 1-\frac{a_{n+1}L}{2} - \frac{|b-(1-\alpha)|}{2u} - \frac{1-\alpha}{2b}}\\
        & - \frac{b}{2} \ps{a_{n+1}(\nabla f(x_n)-p_n)^2, \left( 1-  \frac{|b-(1-\alpha)|u}{b} - (1-\alpha)\right) \mathbf{1}}.
\end{align*}

This concludes the proof.

\subsection{A first result under an upperbound of the step size}
\label{proof:prop_cv-a_n}

\begin{proposition}
  \label{prop:cv-a_n}
Let \Cref{hyp:model} hold true. Suppose moreover that $1-\alpha < b \leq 1$.
Let $\varepsilon >0$ s.t. $a_{\sup} \eqdef \frac{2}{L}\left( 1- \frac{(b- (1-\alpha))^2}{2b\alpha} - \frac{1-\alpha}{2b} - \varepsilon\right)$ is nonnegative.
Assume for all $n \in \bN$,
\begin{equation*}
      a_{n+1} \leq \min \left(a_{\sup},\frac{a_n}{\alpha}\right)\,.
\end{equation*}
Then, for all $n\geq 1$,
\begin{equation*}
  \sum_{k=0}^{n-1} \ps{a_{k+1},\nabla f(x_k)^2} \leq  \frac{2(1+\alpha)}{b^2\alpha} \left(\frac{H_0 - \inf f}{\varepsilon}+ \ps{a_0, p_0^2}\right)
\end{equation*}

\end{proposition}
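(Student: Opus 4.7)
The plan is to apply Lemma~\ref{lemma:lyap} with a clever choice of the free parameter $u$, then convert the resulting bound on $\sum\langle a_{k+1},p_{k+1}^2\rangle$ into the desired bound on $\sum\langle a_{k+1},\nabla f(x_k)^2\rangle$ using the momentum recursion.

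First I would pick $u = \frac{b\alpha}{|b-(1-\alpha)|}$. A direct calculation shows this choice makes $B=0$, and substituting it into $A_{n+1}$ yields $A_{n+1} = 1 - \frac{a_{n+1}L}{2} - \frac{(b-(1-\alpha))^2}{2b\alpha} - \frac{1-\alpha}{2b}$, so the hypothesis $a_{n+1}\le a_{\sup}$ gives $A_{n+1}\ge \varepsilon$ coordinatewise. Plugging back into Lemma~\ref{lemma:lyap} and discarding the (nonpositive) second term then produces the clean descent inequality
\begin{equation*}
H_{n+1} \le H_n - \varepsilon\,\langle a_{n+1}, p_{n+1}^2\rangle.
\end{equation*}

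Next I would telescope. Summing from $k=0$ to $n-1$ and using $H_n \ge f(x_n) \ge \inf f$ yields
\begin{equation*}
\sum_{k=0}^{n-1}\langle a_{k+1}, p_{k+1}^2\rangle \;\le\; \frac{H_0 - \inf f}{\varepsilon}.
\end{equation*}

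Finally, to translate into gradients, I would use the momentum update rewritten as $\nabla f(x_k) = \frac{1}{b}\bigl(p_{k+1} - (1-b)p_k\bigr)$. Applied coordinatewise with the elementary inequality $(u+v)^2\le 2u^2+2v^2$, this gives
\begin{equation*}
\langle a_{k+1},\nabla f(x_k)^2\rangle \le \frac{2}{b^2}\langle a_{k+1}, p_{k+1}^2\rangle + \frac{2(1-b)^2}{b^2}\langle a_{k+1}, p_k^2\rangle.
\end{equation*}
Assumption~\ref{hyp:stepsize} converts the last term into $\frac{2(1-b)^2}{b^2\alpha}\langle a_k, p_k^2\rangle$, an index shift that (together with $(1-b)^2\le 1$) swallows the $\langle a_0,p_0^2\rangle$ initial term and bounds everything by $\frac{2(1+\alpha)}{b^2\alpha}\bigl(\sum_{k=0}^{n-1}\langle a_{k+1},p_{k+1}^2\rangle + \langle a_0,p_0^2\rangle\bigr)$. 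Combining with the telescoped bound yields the statement.

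The only slightly subtle step is the index-shift bookkeeping in the last paragraph: one has to notice that the sum $\sum_{k=0}^{n-1}\langle a_k,p_k^2\rangle$ equals $\langle a_0,p_0^2\rangle + \sum_{k=0}^{n-1}\langle a_{k+1},p_{k+1}^2\rangle - \langle a_n,p_n^2\rangle$, so that the extra term splits cleanly into the already-bounded sum plus the initial contribution $\langle a_0,p_0^2\rangle$. Everything else is a mechanical verification that the choice of $u$ reproduces the definition of $a_{\sup}$ in the statement.
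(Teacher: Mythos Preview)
Your proposal is correct and follows essentially the same route as the paper's proof: both choose $u$ at the boundary value $\frac{b\alpha}{b-(1-\alpha)}$ to make $B=0$ and $A_{n+1}\ge\varepsilon$, telescope Lemma~\ref{lemma:lyap} to bound $\sum\langle a_{k+1},p_{k+1}^2\rangle$, then use $\nabla f(x_k)=\frac{1}{b}p_{k+1}-\frac{1-b}{b}p_k$ together with $a_{k+1}\le a_k/\alpha$ and an index shift to reach the gradient bound. The only cosmetic difference is that the paper applies $(1-b)^2\le 1$ before the index shift whereas you apply it after.
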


\begin{proof}
This is a consequence of \lemmaref{lemma:lyap}. Conditions $A_{n+1} \geq \varepsilon$ and $B \geq 0$ write as follow :
\begin{equation*}
  a_{n+1} \leq \frac{2}{L}\left( 1- \frac{b-(1-\alpha)}{2u} - \frac{1-\alpha}{2b} - \varepsilon \right) \quad \text{and} \quad u \leq \frac{\alpha b}{b-(1-\alpha)}\,.
\end{equation*}
We get the assumption made in the proposition by injecting the second condition into the first one and adding the assumption $\frac{a_{n+1}}{a_n} \leq \frac{1}{\alpha}$ made in the lemma.
Under this assumption, we sum over $0\leq k \leq n-1$ \Cref{eq:descent}, rearrange it and use $A_{n+1} \geq \varepsilon$, $B \geq 0$ to obtain :

\begin{equation*}
\sum_{k=0}^{n-1} \varepsilon \, \ps{a_{k+1},p_{k+1}^2} \leq H_0 - H_n\,,
\end{equation*}
Then, observe that $H_n \geq f(x_n) \geq \inf f$. Therefore, we derive :

\begin{equation}
  \label{eq:sum_pk2}
  \sum_{k=0}^{n-1}  \ps{a_{k+1},p_{k+1}^2} \leq \frac{H_0- \inf f}{\varepsilon}\,.
\end{equation}

Moreover, from the Algorithm~\ref{algo} second update rule, we get $\nabla f(x_k) = \frac{1}{b} p_{k+1} - \frac{1-b}{b} p_k$. Hence, we have for all $k \geq 0$ :
\begin{equation*}
  \nabla f(x_k)^2 \leq 2 \left(\frac{1}{b^2} p_{k+1}^2 + \frac{(1-b)^2}{b^2} p_k^2\right) \leq \frac{2}{b^2} (p_{k+1}^2 + p_k^2)\,.
\end{equation*}

We deduce that :
\begin{align*}
  \label{eq:sumgrad_sump}
  \sum_{k=0}^{n-1} \ps{a_{k+1},\nabla f(x_k)^2} &\leq \frac{2}{b^2} \sum_{k=0}^{n-1} \ps{a_{k+1},p_{k+1}^2 + p_k^2}\\
                    &= \frac{2}{b^2} \sum_{k=0}^{n-1} \ps{a_{k+1},p_{k+1}^2} + \frac{2}{b^2} \sum_{k=0}^{n-1} \ps{a_{k+1},p_k^2}\\
                    &\leq \frac{2}{b^2} \sum_{k=0}^{n-1} \ps{a_{k+1},p_{k+1}^2} + \frac{2}{b^2 \alpha} \sum_{k=0}^{n-1} \ps{a_k,p_k^2}\\
                    &\leq \frac{2}{b^2}(1+ \frac 1\alpha) \sum_{k=0}^{n} \ps{a_k,p_k^2}\\
                    &\leq \frac{2 (1+\alpha)}{b^2\alpha} \left(\frac{H_0 - \inf f}{\varepsilon}+ \ps{a_0, p_0^2}\right)  \,.
\end{align*}
\end{proof}

\subsection{Proof of \Cref{thm:deterministic}}
\label{proof:deterministic}

This is a consequence of \lemmaref{lemma:lyap}. Conditions $A_{n+1} \geq \varepsilon$ and $B \geq 0$ write as follow :
\begin{equation*}
  a_{n+1} \leq \frac{2}{L}\left( 1- \frac{b-(1-\alpha)}{2u} - \frac{1-\alpha}{2b} - \varepsilon \right) \quad \text{and} \quad u \leq \frac{\alpha b}{b-(1-\alpha)}\,.
\end{equation*}
We get the assumption made in the proposition by injecting the second condition into the first one and adding the assumption $\frac{a_{n+1}}{a_n} \leq \alpha$ made in the lemma.
Under this assumption, we sum over $0\leq k \leq n-1$ \Cref{eq:descent}, rearrange it and use $A_{n+1} \geq \varepsilon$, $B \geq 0$ and $a_{k+1} \geq \delta$ to obtain :

\begin{equation*}
\sum_{k=0}^{n-1}\delta \, \varepsilon \, \|p_{k+1}\|^2 \leq H_0 - H_n\,,
\end{equation*}
Then, observe that $H_n \geq f(x_n) \geq \inf f$. Therefore, we derive :

\begin{equation}
  \label{eq:sum_pk2}
  \sum_{k=0}^{n-1}  \|p_{k+1}\|^2 \leq \frac{H_0- \inf f}{\delta \varepsilon}\,.
\end{equation}

Moreover, from the algorithm~\ref{algo} second update rule, we get $\nabla f(x_k) = \frac{1}{b} p_{k+1} - \frac{1-b}{b} p_k$. Hence, we have for all $k \geq 0$ :
\begin{equation*}
  \|\nabla f(x_k)\|^2 \leq 2 \left(\frac{1}{b^2} \|p_{k+1}\|^2 + \frac{(1-b)^2}{b^2} \|p_k\|^2\right) \leq \frac{2}{b^2} (\|p_{k+1}\|^2 + \|p_k\|^2)\,.
\end{equation*}

We deduce that :
\begin{equation}
  \label{eq:sumgrad_sump}
  \sum_{k=0}^{n-1} \|\nabla f(x_k)\|^2 \leq \frac{2}{b^2} \sum_{k=0}^{n-1} (\|p_{k+1}\|^2 + \|p_k\|^2) = \frac{2}{b^2} \left(2 \sum_{k=1}^{n-1} \|p_k\|^2 +  \|p_n\|^2 + \|p_0\|^2 \right) \leq \frac{4}{b^2} \sum_{k=0}^{n} \|p_k\|^2  \,.
\end{equation}

Finally, using \Cref{eq:sum_pk2,eq:sumgrad_sump}, we have :
\begin{equation*}
  \min_{0\leq k \leq n-1} \|\nabla f(x_k)\|^2 \leq \frac{1}{n} \sum_{k=0}^{n-1} \|\nabla f(x_k)\|^2 \leq \frac{4}{nb^2} \left(\frac{H_0 - \inf f}{\delta \varepsilon}+\|p_0\|^2\right)\,.
\end{equation*}

\subsection{Proof of \Cref{thm:stochastic}}
\label{proof:stochastic}
The proof of this proposition mainly follows the same path as its deterministic counterpart.
However, due to stochasticity, a residual term (the last term in \Cref{eq:lyap_stochastic})
quantifying the difference between the stochastic gradient estimate and the true
gradient of the objective function (compare \Cref{eq:lyap_stochastic} to \lemmaref{lemma:lyap}) remains.
Following the exact same steps of \Cref{proof:lemma_descent}, we obtain by replacing
the deterministic gradient $\nabla f(x_n)$ by its stochastic estimate $\nabla f(x_n, \xi_{n+1})$ :

\begin{align}
H_{n+1} &\leq H_n - \ps{a_{n+1}p_{n+1}^2, 1-\frac{a_{n+1}L}{2} - \frac{|b-(1-\alpha)|}{2u} - \frac{1-\alpha}{2b}}\nonumber\\
        & - \frac{b}{2} \ps{a_{n+1}(\nabla f(x_n, \xi_{n+1})-p_n)^2, \left( 1-  \frac{|b-(1-\alpha)|u}{b} - (1-\alpha)\right) \mathbf{1}}\nonumber\\
        & + \ps{\nabla f(x_n, \xi_{n+1})-\nabla F(x_n),a_{n+1} p_{n+1}}\label{eq:lyap_stochastic}\,.
\end{align}

Using the classical inequality $xy \leq \frac{x^2}{2\eta} + \frac{\eta y^2}{2}$ with $\eta = 1/2$ and the almost sure boundedness of the step size $a_{n+1}$, we get :

\begin{align*}
\ps{\nabla f(x_n, \xi_{n+1})-\nabla F(x_n),a_{n+1} p_{n+1}} &\leq \ps{(\nabla f(x_n, \xi_{n+1})-\nabla F(x_n))^2+\frac14 p_{n+1}^2,a_{n+1}}\\
                                                            &\leq \bar a_{\sup} \|\nabla f(x_n, \xi_{n+1})-\nabla F(x_n)\|^2 + \frac 14 \ps{a_{n+1},p_{n+1}^2}\,.
\end{align*}

Therefore, taking the expectation and using the boundedness of the variance, we obtain from \Cref{eq:lyap_stochastic} :

\begin{equation*}
\bE[H_{n+1}] - \bE[H_n] \leq - \bE\left[\ps{a_{n+1}p_{n+1}^2, \frac 34 -\frac{a_{n+1}L}{2} - \frac{|b-(1-\alpha)|}{2u} - \frac{1-\alpha}{2b}} \right] + \bar a_{\sup}\sigma^2\,.
\end{equation*}

Then, the proof follows the lines of \Cref{proof:prop_cv-a_n}. Hence, we have
\begin{equation*}
\bE[H_{n+1}] - \bE[H_n] \leq - \bE\left[\ps{a_{n+1}p_{n+1}^2, \varepsilon \bf{1}} \right] + \bar a_{\sup}\sigma^2\,.
\end{equation*}

We sum these inequalities for $k = 0, \cdots, n-1$, inject the assumption $a_{n+1} \geq \delta$ and rearrange the terms to obtain
\begin{equation}
  \label{eq:bound_sto}
  \delta \,\bE\left[\sum_{k=0}^{n-1}  \|p_{k+1}\|^2 \right] \leq  \bE\left[\sum_{k=0}^{n-1}  \ps{a_{k+1},p_{k+1}^2}\right] \leq \frac{H_0 - \inf f}{\varepsilon} + \frac{n \bar a_{\sup} \sigma^2}{\varepsilon}\,.
\end{equation}

Then, using $\nabla f(x_k,\xi_{k+1}) = \frac{1}{b} p_{k+1} - \frac{1-b}{b} p_k$ and a similar upperbound to \Cref{eq:sumgrad_sump} we show that

\begin{equation}
  \label{eq:boundgrad_boundp}
  \sum_{k=0}^{n-1} \|\nabla f(x_k,\xi_{k+1})\|^2 \leq \frac{4}{b^2}\sum_{k=0}^{n}\|p_k\|^2\,.
\end{equation}

Therefore, combining \Cref{eq:boundgrad_boundp,eq:bound_sto}, we establish the following inequality

\begin{equation*}
 \bE\left[\sum_{k=0}^{n-1}\|\nabla f(x_k,\xi_{k+1})\|^2\right] \leq \frac{4}{b^2} \left(\frac{H_0 - \inf f}{\delta \varepsilon} + \|p_0\|^2 \right) + \frac{4 \bar a_{\sup} n}{\delta\varepsilon b^2} \sigma^2\,.
\end{equation*}

Finally, we apply Jensen's inequality to $\|\cdot\|^2$ and divide the previous inequality by $n$ to obtain the sought result

\begin{equation*}
\frac1n \sum_{k=0}^{n-1} \bE\left[\|\nabla F(x_k)\|^2\right] \leq \frac{4}{n\delta b^2} \left(\frac{H_0 - \inf f}{\delta \varepsilon} + \|p_0\|^2 \right) + \frac{4 \bar a_{\sup}}{\delta\varepsilon b^2} \sigma^2\,.
\end{equation*}

\begin{remark}
  \label{remark:without_lower_bound}
  Following the derivations in \Cref{proof:prop_cv-a_n}, note that we also obtain the following result

  \begin{equation*}
  \bE\left[\sum_{k=0}^{n-1}  \ps{a_{k+1}, \nabla f(x_k,\xi_{k+1})^2}\right] \leq \frac{2(1+\alpha)}{b^2 \alpha} \left(\frac{H_0 - \inf f}{\varepsilon} + \ps{a_0,p_0^2} + \frac{n \bar a_{\sup} \sigma^2}{\varepsilon}\right)\,.
  \end{equation*}
\end{remark}

\subsection{Comparison to \citet{ipiano}}
\label{appendix:comp_ipiano}

We recall the conditions satisfied by $\alpha_n$ and $\beta_n$ in \citet{ipiano} in order to traduce them in terms of the algorithm (\ref{algo}) at stake. Define :
\begin{equation*}
  \delta_n \eqdef \frac{1}{\alpha_n} - \frac{L}{2} - \frac{\beta_n}{2\alpha_n} \qquad  \gamma_n \eqdef \delta_n - \frac{\beta_n}{2\alpha_n}.
\end{equation*}
Conditions of \citet{ipiano} write: $\alpha_n \geq c_1$\, $\beta_n \geq 0$\, $\delta_n \geq \gamma_n \geq c_2$ where $c_1, c_2$ are positive constants and $(\delta_n)$ is monotonically decreasing.

One can remark that algorithm~(\ref{algo}) can be written as~(\ref{hb}) with step sizes $\alpha_n = b a_{n+1}$ and inertial parameters $\beta_n = (1-b)\frac{a_{n+1}}{a_n}$. Conditions on these parameters can be expressed in terms of $a_n$. Supposing $c_2 = 0$, the condition $\gamma_n \geq c_2$ is equivalent to

\begin{equation}
   \frac{a_{n+1}}{a_n} \leq \frac{2}{2-b(2-a_n L)}.
\end{equation}
 Note that the classical condition $a_n \leq 2/L$ shows up consequently. Moreover, the condition on $(\delta_n)$ is equivalent to

 \begin{equation}
   \label{condition:lowerbound}
  \frac{1}{a_{n+1}} \leq \frac{3-b}{2} \frac{1}{a_n} -\frac{1-b}{2a_{n-1}} \qquad \text{for} \qquad n \geq 1.
\end{equation}

Note that we get rid of condition~(\ref{condition:lowerbound}) while allowing adaptive step sizes $a_n$ (see Proposition~\ref{prop:cv-a_n}).

\subsection{Performance of gradient descent in the nonconvex setting.}
\label{proof:grad_descent}

In the nonconvex setting, for a smooth function $f$, we cannot say anything about the convergence rate
of the sequences $(f(x_k))$ and $(x_k)$. Nevertheless, as exposed in \citep[p.28]{nesterov2004book},
we can control the minimum of the gradients norms. We prove this result in the following for completeness.

Consider the gradient descent algorithm defined by : $x_{k+1} = x_k - \gamma \nabla f(x_k)$.
Assume that $\gamma >0$ and $1-\frac{\gamma L}{2} >0$.

Supposing that $\nabla f$ is $L-$Lipschitz, using Taylor's expansion and regrouping the terms, we obtain the following inequality:
\begin{equation*}
f(x_{k+1}) \leq f(x_k) - \gamma \left(1-\frac{\gamma L}{2}\right) \|\nabla f(x_k)\|_2^2.
\label{eq:fdiff}
\end{equation*}

Then, we sum the inequalities for $0 \leq k \leq n-1$, lower bound the gradients
norms in the sum by their minimum and we obtain for $n  \geq 1$ :

\begin{equation*}
 \min_{0\leq k \leq n-1} \|\nabla f(x_k)\|_2^2 \leq \frac{f(x_0) - \inf f}{n \gamma (1- \frac{\gamma L}{2})}.
\end{equation*}

\section{K\L{} Convergence Analysis}
\subsection{Three abstract conditions}
Inspired from the abstract convergence mechanism of \citet[Appendix]{bolte2018first},
we show that similar conditions hold in our case.
We highlight that these conditions are slightly different here, since we do not deal with
\textit{gradient-like descent sequences} (for which the objective function is nonincreasing over the iterations).
Conditions below are closer to those of \citet{ipiano} which studies a non-descent algorithm.
Note however that the Lyapunov function $H$ and the sequence $(z_k)$ we consider are different.

\begin{lemma}
\label{lemma:conditions}
   Let $(z_k)_{k \in \bN}$ be the sequence defined for all $k \in \bN$ by $z_k = (x_k,y_k)$ where $y_k = \sqrt{a_k}p_k$ and $(x_k,p_k)$ is generated by Algorithm~(\ref{algo}) from a starting point $z_0$. Let \Cref{hyp:model,hyp:stepsize} hold true. Assume moreover that condition~(\ref{hyp:stepsize_bound}) holds. Then,

  \begin{enumerate}[(i),noitemsep,leftmargin=*]

    \item (sufficient decrease property) There exists a positive scalar $\rho_1$ s.t. :
          \begin{equation*}
              H(z_{k+1}) - H(z_k) \leq - \rho_1 \, \|x_{k+1} -x_k\|^2 \quad \forall k \in \bN.
          \end{equation*}

    \item There exists a positive scalar $\rho_2$ s.t. :
          \begin{equation*}
              \|\nabla H(z_{k+1})\| \leq \rho_2 \,(\|x_{k+1} -x_k\| + \|x_{k} -x_{k-1}\|) \quad \forall k \geq 1.
          \end{equation*}

    \item (continuity condition) If $\bar z$ is a limit point of a subsequence $(z_{k_j})_{j \in \bN}$, then
     $\lim\limits_{j \rightarrow +\infty} H(z_{k_j}) = H(\bar z)$.
  \end{enumerate}
\end{lemma}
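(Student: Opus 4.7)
The plan is to verify the three conditions in turn, relying on \lemmaref{lemma:lyap} and the coordinatewise bounds $\delta \leq a_{k+1,i} \leq a_{\sup}$ that Condition~(\ref{hyp:stepsize_bound}) guarantees, together with the identities $y_k = \sqrt{a_k}\,p_k$ and $x_{k+1}-x_k = -a_{k+1}p_{k+1}$.

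For the sufficient decrease property (i), the argument proceeds exactly as in the proof of \Cref{thm:deterministic}: under the assumptions of that theorem one has $A_{n+1}\geq\varepsilon$ and $B\geq 0$, so \lemmaref{lemma:lyap} specializes to $H_{k+1}-H_k \leq -\varepsilon\,\ps{a_{k+1},p_{k+1}^2}$. Then the upper bound $a_{k+1,i}\leq a_{\sup}$ yields $\|x_{k+1}-x_k\|^2 = \sum_i a_{k+1,i}^2 p_{k+1,i}^2 \leq a_{\sup}\,\ps{a_{k+1},p_{k+1}^2}$, which gives the claim with $\rho_1 = \varepsilon/a_{\sup}$.

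For (ii), I would first note that $\nabla H(z) = (\nabla f(x),\,y/b)$, so $\|\nabla H(z_{k+1})\| \leq \|\nabla f(x_{k+1})\| + \frac{1}{b}\|y_{k+1}\|$. The $y$-part is easy: from $a_{k+1,i}^2 \geq \delta\,a_{k+1,i}$ we get $\|y_{k+1}\|^2 = \ps{a_{k+1},p_{k+1}^2} \leq \frac{1}{\delta}\|x_{k+1}-x_k\|^2$. For the gradient part, $L$-Lipschitzness gives $\|\nabla f(x_{k+1})\| \leq \|\nabla f(x_k)\| + L\|x_{k+1}-x_k\|$, and inverting the momentum update yields $\nabla f(x_k) = \frac{1}{b}p_{k+1} - \frac{1-b}{b}p_k$. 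The lower bound $a_{n,i}\geq\delta$ then produces $\|p_{k+1}\|\leq \frac{1}{\delta}\|x_{k+1}-x_k\|$ and, for $k\geq 1$, $\|p_k\|\leq \frac{1}{\delta}\|x_k-x_{k-1}\|$. Combining these bounds gives (ii) with an explicit $\rho_2$ depending on $L,b,\delta$. The continuity condition (iii) is essentially free: since $f$ is continuously differentiable, $H(x,y)=f(x)+\|y\|^2/(2b)$ is continuous on $\bR^{2d}$, so convergence $z_{k_j}\to \bar z$ immediately forces $H(z_{k_j})\to H(\bar z)$.

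None of these steps involves deep obstacles; the only delicate point is (ii), where the momentum structure means $\nabla f(x_k)$ is a linear combination of \emph{two} consecutive momentum vectors $p_{k+1}$ and $p_k$, which is precisely why the estimate must be allowed to depend on two consecutive differences $\|x_{k+1}-x_k\|$ and $\|x_k-x_{k-1}\|$. The bookkeeping is routine once both the uniform lower bound $\delta$ (used to control $\|p_\cdot\|$ by $\|x_\cdot - x_{\cdot-1}\|$) and the uniform upper bound $a_{\sup}$ (used in (i)) are in play, which is exactly why Condition~(\ref{hyp:stepsize_bound}) enters both ways.
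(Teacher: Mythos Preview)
Your proposal is correct and follows essentially the same route as the paper: the same Lyapunov descent estimate with $\rho_1=\varepsilon/a_{\sup}$ for (i), the same decomposition $\|\nabla H(z_{k+1})\|\leq\|\nabla f(x_{k+1})\|+\frac{1}{b}\|y_{k+1}\|$ combined with the momentum identity $\nabla f(x_k)=\frac{1}{b}p_{k+1}-\frac{1-b}{b}p_k$ and the bounds $\delta\leq a_{k,i}\leq a_{\sup}$ for (ii), and continuity of $H$ for (iii). The only cosmetic difference is that the paper rewrites the update in Heavy-ball form and squares before taking roots in (ii), whereas you apply the triangle inequality directly; both yield an explicit $\rho_2$ depending on $L,b,\delta$.
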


\begin{remark}
  Note that the conditions in \lemmaref{lemma:conditions} can be generalized to a nonsmooth objective function.
  Indeed, in \citet[Appendix]{bolte2018first}, the Fréchet subdifferential replaces the gradient.
\end{remark}

\begin{proof}
\begin{enumerate}[(i),leftmargin=*]

      \item From \Cref{lemma:lyap,thm:deterministic}, we get for all $k \in \bN$:
      \begin{equation*}
        H(z_{k+1}) - H(z_k) \leq - \varepsilon \ps{a_{k+1},p_{k+1}^2}
                            \leq - \varepsilon \ps{a_{k+1},\left(\frac{x_{k+1}-x_k}{-a_{k+1}} \right)^2}
                            \leq - \frac{\varepsilon}{a_{\sup}}\,\|x_{k+1} -x_k\|^2.
      \end{equation*}

      We set $\rho_1 \eqdef \frac{\varepsilon}{a_{\sup}}$.

      \item First, observe that for all $k \in \bN$
      \begin{equation}
        \label{eq:decomp}
          \|\nabla H(z_{k+1})\| \leq \|\nabla f(x_{k+1})\| + \frac{1}{b} \,\|y_{k+1}\|\,.
        \end{equation}

Now, let us upperbound each one of these two terms.
Recall that we can rewrite our algorithm under a "Heavy-ball"-like form as follows:
\begin{equation*}
  x_{k+1} = x_k - \alpha_k \nabla f(x_k) + \beta_k (x_k - x_{k-1}) \quad \forall k \geq 1.
\end{equation*}
where $\alpha_k \eqdef b a_{k+1}$ and $\beta_k = (1-b) \frac{a_{k+1}}{a_k}$ are vectors.

On the one hand, using the L-Lipschitz continuity of the gradient, we obtain
\begin{align*}
 \|\nabla f(x_{k+1})\|^2 & \leq 2 \left(\|\nabla f(x_{k+1}) - \nabla f(x_k)\|^2 +  \|\nabla f(x_k)\|^2\right)\\
                         & \leq 2 \left(L^2\,\| x_{k+1} - x_k \|^2 +  \|\nabla f(x_k)\|^2\right)
                         %&& \text{(L-Lipschitz continuity of the gradient)}
\end{align*}

Moreover,
\begin{align*}
\|\nabla f(x_k)\|^2 & = \left\|\frac{x_k -x_{k+1}}{\alpha_k} + \frac{\beta_k}{\alpha_k} (x_k - x_{k-1}) \right\|^2\\
                    & \leq 2 \left\|\frac{x_k -x_{k+1}}{ba_{k+1}}\right\|^2 + 2 \left\|\frac{1-b}{b}\frac{1}{a_k} (x_k - x_{k-1}) \right\|^2\\
                    & \leq \frac{2}{b^2 \delta^2}\,\| x_{k+1} - x_k \|^2 + \frac{2(1-b)^2}{b^2 \delta^2}\,\| x_{k} - x_{k-1} \|^2\\
                    & \leq \frac{2}{b^2 \delta^2}\,(\| x_{k+1} - x_k \|^2 + \| x_{k} - x_{k-1} \|^2).
\end{align*}

Hence,
\begin{align*}
\|\nabla f(x_{k+1})\|^2 & \leq 2 \left(L^2\,\| x_{k+1} - x_k \|^2 +  \|\nabla f(x_k)\|^2\right)\\
                        & \leq 2 \left( L^2 + \frac{2}{b^2 \delta^2} \right) \,\| x_{k+1} - x_k \|^2 + \frac{4}{b^2 \delta^2}\, \| x_{k} - x_{k-1} \|^2\\
                        &\leq 2 \left( L^2 + \frac{2}{b^2 \delta^2} \right) (\| x_{k+1} - x_k \|^2 + \| x_{k} - x_{k-1} \|^2)\,.
\end{align*}

Therefore, the following inequality holds :
\begin{equation*}
\|\nabla f(x_{k+1})\| \leq \sqrt{2 \left( L^2 + \frac{2}{b^2 \delta^2} \right)} (\| x_{k+1} - x_k \| + \| x_{k} - x_{k-1} \|)\,.
\end{equation*}

On the otherhand,
\begin{equation*}
  \|y_{k+1}\| = \|\sqrt{a_{k+1}}p_{k+1}\| = \left\|\frac{x_{k+1}-x_k}{\sqrt{a_{k+1}}}\right\| \leq \frac{1}{\sqrt{\delta}}\,\|x_{k+1}-x_k\|\,.
\end{equation*}

Finally, combining the inequalities for both terms in \Cref{eq:decomp}, we obtain

\begin{equation*}
\|\nabla H(z_{k+1})\| \leq \rho_2 (\| x_{k+1} - x_k \| + \| x_{k} - x_{k-1} \|) \quad \forall k \geq 1\,.
\end{equation*}
with $\rho_2 \eqdef  \left( \sqrt{2 \left( L^2 + \frac{2}{b^2 \delta^2} \right)} + \frac{1}{b\sqrt{\delta}} \right)$.

      \item This is a consequence of the continuity of $H$.
\end{enumerate}
\end{proof}

\subsection{Proof of \lemmaref{lemma:subseq}}

\begin{enumerate}[(i),leftmargin=*]
      \item By \Cref{thm:deterministic}, the sequence $(H(z_n))_{n \in \bN}$ is nonincreasing.
      Therefore, for all $n \in \bN$, $H(z_n) \leq H(z_0)$\, and hence $z_n \in \{z \, \text{:}\, H(z) \leq H(z_0)\}$\,.
      Since $f$ is coercive, $H$ is also coercive and its level sets are bounded.
      As a consequence, $(z_n)_{n \in \bN}$ is bounded and there exist $z_{*} \in \bR^d$ and a subsequence
      $(z_{k_j})_{j \in \bN}$ s.t. $z_{k_j} \to z_{*}$ as $j \to \infty$. Hence, $\omega(z_0) \neq \emptyset$\,.
      Furthermore, $\omega(z_0) = \bigcap_{q \in \bN} \overline{\bigcup_{k \geq q} \{z_k\}}$ is compact as an intersection
      of compact sets.
      \item First, $\text{crit} H = \text{crit} f \times \{0\}$ because $\nabla H(z) = (\nabla f(x), y/b)^T$.
      Let $z_{*} \in \omega(z_0)$. Recall that $x_{k+1}-x_k \rightarrow 0$ as $k \to \infty$ by \Cref{thm:deterministic}.
      We deduce from the second assertion of \lemmaref{lemma:conditions} that  $\nabla H(z_k) \rightarrow 0$ as $k \to \infty$\,.
      As $z_{*} \in \omega(z_0)$, there exists a subsequence $(z_{k_j})_{j \in \bN}$ converging to $z_{*}$. Then, by Lipschitz continuity
      of  $\nabla H$, we get that $\nabla H(z_{k_j}) \rightarrow \nabla H(z_{*})$ as $j \to \infty$\,. Finally, $\nabla H(z_{*}) = 0$
      since $\nabla H(z_k) \rightarrow 0$ and $(\nabla H(z_{k_j}))_{j \in \bN}$ is a subsequence of $(\nabla H(z_n))_{n \in \bN}$ \,.

      \item This point stems from the definition of limit points.
      Every subsequence of the sequence $(\sd(z_k, \omega(z_0)))_{k \in \bN}$  converges to zero
      as a consequence of the definition of $\omega(z_0)$.

      \item The sequence $(H(z_n))_{n \in \bN}$ is nonincreasing by \Cref{thm:deterministic}.
      It is also bounded from below because $H(z_k) \geq f(x_k) \geq \inf f$ for all $k \in \bN$.
      Hence we can denote by $l$ its limit. Let $\bar z \in \omega(z_0)$.
      There there exists a subsequence $(z_{k_j})_{j \in \bN}$ converging to $\bar z$ as $j \to \infty$\,.
      By the third assertion of \lemmaref{lemma:conditions}, $\lim\limits_{j \rightarrow +\infty} H(z_{k_j}) = H(\bar z)$\,.
      Hence this limit equals $l$ since $(H(z_n))_{n \in \bN}$ converges towards $l$. Therefore, the restriction
      of $H$ to $\omega(z_0)$ equals~$l$\,.
\end{enumerate}

\subsection{Proof of \Cref{thm:rates}}
\label{proof:KL_rates}

The first step of this proof follows the same path as \citet[Proof of Theorem 6.2, Appendix]{bolte2018first}.
Since $f$ is coercive, $H$ is also coercive. The sequence $(H(z_k))_{k \in \bN}$ is nonincreasing. Hence, $(z_k)$ is bounded and
there exists a subsequence $(z_{k_q})_{q \in \bN}$ and $\bar z \in \bR^{2d}$ s.t. $z_{k_q} \to \bar z$ as $q \to \infty$\,. Then, since
$(H(z_k))_{k \in \bN}$ is nonincreasing and lowerbounded by $\inf f$, it is convergent and we obtain by continuity of $H$,
\begin{equation}
  \label{eq:lim_H}
\lim\limits_{k \rightarrow +\infty} H(z_k) = H(\bar z)\,.
\end{equation}
Using \Cref{thm:deterministic}, observe that the sequence $(y_k)$ converges to zero since $(a_k)$ is bounded and $p_k \to 0$.
If there exists $\bar k \in \bN$\, s.t. $H(z_{\bar k}) = H(\bar z)$\,, then $H(z_{\bar k +1}) = H(\bar z)$ and by
the first point of \lemmaref{lemma:conditions}, $x_{\bar k +1} = x_{\bar k}$ and then $(x_k)_{k \in \bN}$ is stationary
and for all $k \geq \bar k$\,, $H(z_k) = H(\bar z)$ and the results of the theorem hold in this case (note that $\bar z \in \text{crit} H$ by \lemmaref{lemma:subseq}). Therefore, we can assume now that $H(\bar z) < H(z_k) \forall k >0$\, since $(H(z_k))_{k \in \bN}$ is nonincreasing and
\Cref{eq:lim_H} holds. One more time, from \Cref{eq:lim_H}, we have that for all $\eta > 0$, there exists $k_0 \in \bN$ s.t. $H(z_k) < H(\bar z) + \eta$\, for all
$k > k_0$. From \lemmaref{lemma:subseq}, we get $\sd(z_k, \omega(z_0)) \to 0$ as $k \rightarrow +\infty$\,. Hence, for all $\varepsilon > 0$, there exists $k_1 \in \bN$ s.t. $\sd(z_k, \omega(z_0)) < \varepsilon$ for all $k > k_1$\,. Moreover, $\omega(z_0)$ is a nonempty compact set and $H$ is finite and constant on it. Therefore, we can apply the uniformization \lemmaref{lemma:unif_KL} with $\Omega = \omega(z_0)$. Hence, for any $k > l \eqdef \max(k_0,k_1)$, we get
\begin{equation}
  \label{eq:KL_H}
\varphi'(H(z_k)-H(\bar z))^2 \, \|\nabla H(z_k)\|^2 \geq 1\,.
\end{equation}
This completes the first step of the proof.
In the second step, we follow the proof of \citet[Theorem 2]{johnstone2017convergence}.
Using \lemmaref{lemma:conditions}~.(i)-(ii), we can write for all $k \geq 1$,
\begin{equation*}
\|\nabla H(z_{k+1})\|^2 \leq 2 \rho_2^2 \,(\|x_{k+1} -x_k\|^2 + \|x_{k} -x_{k-1}\|^2) \leq \frac{2 \rho_2^2}{\rho_1} (H(z_{k-1})-H(z_{k+1}))\,.
\end{equation*}

Injecting the last inequality in \Cref{eq:KL_H}, we obtain for all $k > k_2 \eqdef \max(l,2)$,
\begin{equation*}
\frac{2 \rho_2^2}{\rho_1}\,\varphi'(H(z_k)-H(\bar z))^2\, (H(z_{k-2})-H(z_{k})) \geq 1\,.
\end{equation*}

Now, use $\varphi'(s) = \bar c s^{\theta -1}$ to derive the following for all $k > k_2$:
\begin{equation}
  \label{eq:key}
[H(z_{k-2})-H(\bar z)] - [H(z_k)-H(\bar z)] \geq \frac{\rho_1}{2 \rho_2^2 \, \bar{c}^2} [H(z_k)-H(\bar z)]^{2(1-\theta)}\,.
\end{equation}

Let $r_k \eqdef  H(z_k) - H(\bar z)$ and $C_1 = \frac{\rho_1}{2 \rho_2^2 \, \bar{c}^2}$. Then, we can rewrite \Cref{eq:key} as
\begin{equation}
  \label{eq:rec_rate}
r_{k-2} - r_k \geq C_1 r_k^{2(1-\theta)} \quad \forall k > k_2\,.
\end{equation}

We distinguish three different cases to obtain the sought results.
\begin{enumerate}[(i),leftmargin=*]
\item \underline{$\theta = 1$}\,:\\
Suppose $r_k > 0$ for all $k >k_2$\,. Then, since we know that $r_k \to 0$ by \Cref{eq:lim_H}, $C_1$ must be
equal to $0$. This is a contradiction. Therefore, there exist $k_3 \in \bN$ s.t. $r_k = 0$ for all $k > k_3$\, (recall that
$(r_k)_{k \in \bN}$ is nonincreasing).

\item \underline{$\theta \geq \frac{1}{2}$}\,:\\
As $r_k \to 0$, there exists $k_4 \in \bN$ s.t. for all $k \geq k_4,$\, $r_k \leq 1$\,. Observe that $2(1-\theta) \leq 1$ and hence
$r_{k-2} - r_k \geq C_1 r_k$ for all $k > k_2$\, and then
\begin{equation}
  \label{eq:lin_rate}
r_k \leq (1+C_1)^{-1} r_{k-2} \leq  (1+C_1)^{-p_1} r_{k_4}\,.
\end{equation}
where $p_1 \eqdef \lfloor \frac{k-k_4}{2} \rfloor$\,. Notice that $p_1 > \frac{k-k_4-2}{2}$. Thus, the linear convergence
result follows.
Note also that if $\theta = 1/2$,\, $2(1- \theta) = 1$ and \Cref{eq:lin_rate} holds for all $k>k_2$\,.

\item \underline{$\theta < \frac{1}{2}$}\,:\\
Define the function $h$ by $h(t) = \frac{D}{1-2\theta} t^{2\theta -1}$ where $D > 0$ is a constant. Then,

\begin{equation*}
h(r_k) - h(r_{k-2}) = \int_{r_{k-2}}^{r_k} h'(t)dt = D \int_{r_k}^{r_{k-2}} t^{2\theta-2} dt \geq D\,(r_{k-2}-r_k)\,r_{k-2}^{2\theta -2}\,.
\end{equation*}

We disentangle now two cases :
\begin{enumerate}[(a)]
\item Suppose $2 r_{k-2}^{2\theta-2} \geq r_k^{2\theta-2}$. Then, by \Cref{eq:rec_rate}, we get
\begin{equation}
  \label{eq:case1}
h(r_k) - h(r_{k-2}) = D\,(r_{k-2}-r_k)\,r_{k-2}^{2\theta -2} \geq \frac{C_1 \,D}{2}\,.
\end{equation}

\item Suppose now the opposite inequation $2 r_{k-2}^{2\theta-2} < r_k^{2\theta-2}$.
We can suppose without loss of generality that $r_k$ are all positive. Otherwise,
if there exists $p$ such that $r_p = 0$, the sequence
$(r_k)_{k \in \bN}$ will be stationary at $0$ for all $k \geq p$\,.
Observe that $2\theta -2 < 2\theta -1 < 0$, thus $\frac{2 \theta -1}{2 \theta -2} > 0$\,.
As a consequence, we can write in this case $r_k^{2\theta -1} > q\, r_{k-2}^{2\theta-1}\,$
where $q \eqdef 2^{\frac{2 \theta -1}{2 \theta -2}} > 1$\,. Therefore, using moreover that
the sequence $(r_k)_{k \in \bN}$ is nonincreasing and $2 \theta -1 < 0$, we derive the following

\begin{equation}
\label{eq:case2}
h(r_k) - h(r_{k-2}) = \frac{D}{1-2\theta}(r_k^{2\theta-1}-r_{k-2}^{2\theta-1}) >  \frac{D}{1-2\theta}\,(q-1)r_{k-2}^{2\theta-1} > \frac{D}{1-2\theta}\,(q-1)r_{k_2}^{2\theta-1} \eqdef C_2\,.
\end{equation}
\end{enumerate}

Combining \Cref{eq:case1} and \Cref{eq:case2} yields $h(r_k) \geq  h(r_{k-2}) + C_3$ where $C_3 \eqdef \min(C_2,\frac{C_1 \,D}{2})$\,.
Consequently, $h(r_k) \geq h(r_{k-2\,p_2}) + p_2\,C_3$\, where $p_2 \eqdef \lfloor \frac{k-k_2}{2} \rfloor$\,. We deduce from this inequality that
\begin{equation*}
h(r_k) \geq h(r_k) - h(r_{k-2\,p_2}) \geq p_2\,C_3\,.
\end{equation*}

Therefore, rearranging this inequality using the definition of $h$, we obtain $r_k^{1-2\theta} \leq \frac{D}{1-2\theta} (C_3\,p_2)^{-1}$\,. Then, since $p_2 > \frac{k-k_2-2}{2}$\,,

\begin{equation*}
r_k \leq C_4\,p_2^{\frac{1}{2\theta-1}} \leq C_4 \left(\frac{k-k_2-2}{2}\right)^{\frac{1}{2\theta-1}}\,.
\end{equation*}
where $C_4 \eqdef \left(\frac{C_3\,(1-2\theta)}{D}\right)^{\frac{1}{2\theta-1}}$\,.
\end{enumerate}
We conclude the proof by observing that $f(x_{k}) \leq H(z_k)$ and recalling that $\bar z \in \text{crit} H$\,.

\subsection{Proof of \lemmaref{lemma:f_KL}}

Since $f$ has the K\L{} property at $\bar x$ with an exponent $\theta \in (0,1/2]$, there exist
$c, \varepsilon$ and $\nu > 0$ s.t.
\begin{equation}
  \label{eq:proof_kl}
  \|\nabla f(x)\|^{\frac{1}{1-\theta}} \geq c (f(x)-f(\bar x))
  \end{equation}

  for all $x \in \bR^d$ s.t. $\|x-\bar x\|\leq \varepsilon$\, and $f(x) < f(\bar x) + \nu$
  where condition $f(\bar x)-f(x)$ is dropped because \Cref{eq:proof_kl} holds trivially otherwise.
  Let $z = (x,y) \in \bR^{2d}$ be s.t. $\|x-\bar x\|\leq \varepsilon$\,, $\|y\| \leq \varepsilon$
  and $H(\bar x,0) < H(x,y) < H(\bar x,0) + \nu$\,. We assume that $\varepsilon < b$\, ($\varepsilon$ can
  be shrunk if needed). We have $f(x) \leq  H(x,y) < H(\bar x,0) + \nu = f(\bar x) + \nu$\,. Hence \Cref{eq:proof_kl} holds for these $x$.

  By concavity of $u \mapsto u^{\frac{1}{2(1-\theta)}}$\,, we obtain
  $$
  \|\nabla H(x,y)\|^{\frac{1}{1-\theta}} \geq C_0\,\left( \|\nabla f(x)\|^{\frac{1}{1-\theta}} + \left\|\frac{y}{b}\right\|^{\frac{1}{1-\theta}}  \right)
  $$
  where $C_0 \eqdef 2^{\frac{1}{2(1-\theta)}-1}$\,.

Hence, using \Cref{eq:proof_kl}, we get
$$
\|\nabla H(x,y)\|^{\frac{1}{1-\theta}} \geq C_0\,\left( c\,(f(x)-f(\bar x)) + \left\|\frac{y}{b}\right\|^{\frac{1}{1-\theta}}  \right)\,.
$$

Observe now that $\frac{1}{1-\theta} \geq 2$ and  $\left\|\frac{y}{b}\right\| \leq \frac{\varepsilon}{b} \leq 1$. Therefore, $\left\|\frac{y}{b}\right\|^{\frac{1}{1-\theta}} \geq  \|y/b\|^2$\,.

Finally,
\begin{align*}
\|\nabla H(x,y)\|^{\frac{1}{1-\theta}} &\geq C_0\,\left( c\,(f(x)-f(\bar x)) +  \frac{2}{b}\frac{1}{2b}\|y\|^2 \right) \\
                                       &\geq C_0\,\min\left(c,\frac{2}{b}\right)\,\left(f(x)-f(\bar x) + \frac{1}{2b}\|y\|^2 \right)\\
                                       & = C_0\,\min\left(c,\frac{2}{b}\right)\,\left( H(x,y)-H(\bar x,0) \right)\,.
\end{align*}

This completes the proof.
\setcounter{page}{240}
\pagestyle{empty}

% \subsection{About the K\L{}-property}
%
% \textbf{A simple example for building intuition.}
% Let us consider the $1$D function $g$ defined by
% $g(x) = |x|^p$ for $p \geq 2$ as exposed in \citet{johnstone2017convergence}. The function $\varphi (t) = t^{1/p}$ is a desingularizing
% function. When $g$ is flat around the origin (large $p$), gradient methods are slower to converge. Therefore, a smaller K\L{} exponent encodes
% a slower convergence behavior near a critical point.
%
% \begin{figure}[ht]
% \vskip 0.2in
% \begin{center}
% \centerline{\includegraphics[width=\columnwidth]{figures/plot_xp_example}}
% \caption{Gradient descent iterates on functions $x \mapsto x^p$ and corresponding desingularizing functions.}
% \label{fig_KL}
% \end{center}
% \vskip -0.2in
% \end{figure}
%
% \begin{remark}
%   \Cref{def:KL} can be generalized to a nonsmooth function by introducing the notion
%   of the Fréchet subdifferential (which is a set) and replacing the norm by the distance to a set.
% \end{remark}

\end{document}